\theoremstyle{plain}
\newtheorem{Def}{Definition}[section]
\newtheorem{Theor}[Def]{Theorem}
\newtheorem{Corol}[Def]{Corollary}
\newtheorem{Prop}[Def]{Proposition}
\newtheorem{Lem}[Def]{Lemma}
\theoremstyle{remark}
\newtheorem{Rem}[Def]{Remark}
\newcommand{\cR}{{\mathbb R}}
\newcommand{\non}{\nonumber}
\newcommand{\eq}[1]{\mbox{\rm {(\ref{#1})}}}
\newcommand{\ve}{\varepsilon}
\newcommand{\na}{\nabla}
\newcommand{\Om}{\Omega}
\newcommand{\Sig}{\Sigma^{\rm lin}}
\newcommand{\Sigbar}{\bar\Sigma^{\rm lin}}
\newcommand{\si}{\hat\sigma}
\DeclareMathOperator{\di}{div}
\DeclareMathOperator{\Curl}{Curl}
\DeclareMathOperator{\dom}{dom}
\DeclareMathOperator{\inter}{int}
\DeclareMathOperator{\rt}{rt}
\newcommand{\yieldlimit}{\sigma_{\mathrm{y}}}
\newcommand{\R}{\mathbb{R}}
\newcommand{\C}{\mathbb{C}}
\DeclareMathOperator{\sym}{sym}
\DeclareMathOperator{\Tr}{tr}
\DeclareMathOperator{\dev}{dev}
\DeclareMathOperator{\so}{\mathfrak{so}}
\begin{document}
\title{Well-posedness for dislocation based gradient visco-plasticity with isotropic hardening}
\author{N. Kraynyukova%
\thanks{Nataliya Kraynyukova, Fachbereich Mathematik, 
Technische Universit\"at Darmstadt, 
Schlossgartenstrasse 7, 64289 Darmstadt, Germany, email: kraynyukova@mathematik.tu-darmstadt.de, 
Tel.: +49-6151-16-3287},\,
P. Neff%
\thanks{Patrizio Neff, Lehrstuhl f\"ur Nichtlineare Analysis und Modellierung, Fakult\"at f\"ur  Mathematik, Universit\"at Duisburg-Essen, Campus Essen, Thea-Leymann-Stra§e 9, 45127 Essen, Germany, email: patrizio.neff@uni-due.de, Tel.: +49-201-183-4243},\,
S. Nesenenko%
\thanks{Corresponding author: Sergiy Nesenenko, Fakult\"at f\"ur  Mathematik, Universit\"at Duisburg-Essen, Campus Essen, Thea-Leymann-Stra§e 9, 45127 Essen, Germany, email: sergiy.nesenenko@uni-due.de},\,
K. Che{\l}mi\'nski%
\thanks{Krzysztof Che{\l}mi\'nski, Faculty of Mathematics and Information Science, Warsaw University of Technology, Koszykowa 75, 00-662 Warsaw, Poland}
}
\maketitle

\begin{abstract}
In this work we establish the well-posedness for infinitesimal dislocation based
gradient viscoplasticity with isotropic hardening for general gradient monotone plastic flows. We assume an additive split of the displacement gradient into non-symmetric elastic distortion and non-symmetric plastic distortion. The thermodynamic potential is augmented with a term taking the dislocation density tensor $\Curl p$ into account. 
The constitutive equations in the models we 
study are assumed to be of self-controlling type. Based on the generalized version of Korn's inequality for incompatible tensor fields (the non-symmetric plastic distortion)
due to Neff/Pauly/Witsch the existence of solutions of quasi-static initial-boundary value problems 
under consideration is shown using a time-discretization technique and a monotone operator method.  
\end{abstract} 

\noindent{\bf{Key words:}} gradient viscoplasticity, rate dependent response, 
non-associative flow rule, Rothe's time-discretization method,
Korn's inequality for incompatible tensor fields, 
maximal monotone method, geometrically necessary dislocations, plastic spin, defect energy.\\
\\[2ex]
\textbf{AMS 2000 subject classification:} 35B65, 35D10, 74C10, 74D10,
35J25, 34G20, 34G25, 47H04, 47H05
\section{Introduction}
Within the framework of the strain gradient plasticity theory
we study the existence of solutions of
quasistatic initial-boundary value problems arising in gradient viscoplasticity with isotropic hardening. 
The models we study use rate-dependent constitutive equations with internal variables to describe the
deformation behaviour of metals at infinitesimally small strain. 

In this paper we consider the rate-dependent (viscoplastic) behaviour only. 
The model we study here has been first presented in \cite{Neff_Chelminski07_disloc} and 
has been inspired by the early work of Menzel and Steinmann \cite{Steinmann00}. Contrary to more classical strain gradient approaches, the model features a non-symmetric plastic distortion field $p\in{\cal M}^3$ 
(see \cite{Bardella10}),
a dislocation based energy storage based solely on $|\Curl p|$ and second gradients of the plastic 
distortion in the form of $\Curl\Curl p$ acting as dislocation based kinematical backstresses. 
Preliminary works on the problem concern the uniqueness (see \cite{Neff_Iutam08}),
 the well-posedness of a rate-independent variant
(see \cite{Ebobisse_et_2014,Ebobisse_Neff09,Neff_Chelminski07_disloc}) and of a rate-dependent variant without isotropic hardening (see \cite{NesenenkoNeff2011,NesenenkoNeff2013}), 
the possibility of homogenization 
(see \cite{NesenenkoNeff2014}), as well as FEM-implementations 
(see \cite{Neff_Sydow_Wieners08,Reddy06}). In \cite{Lussardi08,Reddy06} the well-posedness for a rate-independent model of Gurtin and Anand \cite{Gurtin05b} is shown under the decisive assumption that the plastic distortion is symmetric (the irrotational case), in which case we may really speak of a strain gradient plasticity model, since the gradient acts on the plastic strain. 
\paragraph{Presentation of the model.}
For completeness of the work we sketch some of the ingredients of the model.
The sets, ${\cal M}^3$ and ${\cal S}^3$ denote the sets of all
$3 \times 3$--matrices and of all symmetric $3 \times 3$--matrices,
 respectively. We recall that the space of all 
$3 \times 3$--matrices  ${\cal M}^3$ can be isomorphically identified 
 with the space ${\mathbb R}^9$. Therefore we 
can define a linear mapping $B: {\mathbb R}^N\to{\cal M}^3$ as a composition
of a projector from ${\mathbb R}^N$ onto ${\mathbb R}^9$ and the isomorphism
between ${\mathbb R}^9$ and ${\cal M}^3$. The transpose 
$B^T: {\cal M}^3 \rightarrow {\mathbb R}^N$ is given then by
$$B^T{\tau}=(\hat{z},0)^T$$
for $\tau\in{\cal M}^3$ and $z=(\hat{z}, \tilde{z})^T\in{\mathbb R}^N,$ 
$\tilde{z}\in{\mathbb R}^{N-9}$.
 Next, as is usual in plasticity theory, we split the total displacement gradient into non symmetric elastic and plastic distortions
\begin{align*}
  \nabla u=e+p\, .
\end{align*}
For invariance reasons, the elastic energy contribution may only depend on the elastic strains $\sym e=\sym (\nabla u-p)$. 
While $p$ is non-symmetric, 
a distinguishing feature of the model is that, similar to classical approaches, 
only the symmetric part $\varepsilon_p:=\sym p$ of the plastic distortion appears in the local Cauchy stress $\sigma$, while the higher order stresses are non-symmetric (see \cite{Neff_techmech07,Neff_Svendsen08} for more details). We assume as well plastic incompressibility $\Tr{p}=0$.
 We consider here a free
energy of the form
\begin{eqnarray}\label{free-eng}
\Psi(\nabla u,\Curl p,z):
&=&\underbrace{\Psi^{\mbox{\scriptsize lin}}_e(e)}_{\mbox{\small
elastic energy}}\,\, +\,\,\,\underbrace{\Psi^{\mbox{\scriptsize
lin}}_{\Curl}(\Curl p)}_{\mbox{\small defect
energy (GND)}}\\
\nonumber &+&\underbrace{\Psi_{\mbox{\scriptsize
hard}}(z)}_{\mbox{\small hardening energy (SSD)}}
 \end{eqnarray} where
\[\Psi^{\rm{\scriptsize lin}}_e(e):=\frac12 e\cdot \C e,\quad\Psi^{\rm{\scriptsize
lin}}_{\Curl}(\Curl p)=\frac{C_1}2\|\Curl p\|^2\,{\rm and }\,
\Psi_{\rm\scriptsize hard}(z)=\frac12 Lz\cdot z\,.\]
Here, the linear mapping
$L:{\mathbb R}^N\to{\mathbb R}^N$ corresponds to isotropic hardening effects and is assumed to be positive semi-definite and $C_1$ is a given non-negative material constant.
The positive definite elasticity tensor $\mathbb{C}$ is able to represent 
the elastic anisotropy of the material. The local free-energy imbalance states that
\begin{equation}
\dot{\Psi} - \sigma\cdot \dot{e} - \sigma\cdot \dot{p}  \leq 0\ .
\label{2ndlaw}
\end{equation}
Now we expand the first term, substitute (\ref{free-eng}) and get
\begin{equation}\label{exp-2ndlaw}
(\mathbb{C}e-\sigma)\cdot \dot{e}-\sigma\cdot \dot{p}+
C_1\Curl p\cdot \Curl\dot{p}+Lz\cdot \dot{z}\leq0\,
,
\end{equation}
which, using arguments from thermodynamics gives the elastic
relation
\begin{equation}
\sigma = {\C} \sym(\nabla u-p)
\label{elasticlaw}
\end{equation}
and the reduced dissipation inequality
\begin{equation}\label{reduced-diss}-\sigma\cdot \dot{p}+
C_1\Curl p\cdot \Curl\dot{p}+Lz\cdot \dot{z}\leq0\,.
\end{equation} 
Now we integrate
(\ref{reduced-diss}) over $\Omega$ and  get
\begin{eqnarray}
\nonumber0&\geq&\int_{\Omega}\Bigl[-\sigma\cdot \dot{p}+
C_1\Curl p\cdot \Curl\dot{p}+Lz\cdot \dot{z}\Bigr]\\
\nonumber&=&-\int_{\Omega}\Bigl[\sigma\cdot\dot{p}
-C_1\Curl\,\Curl p\cdot \dot{p}-Lz\cdot\dot{z}\\
&& \qquad +\sum_{i=1}^3\int_{\partial\Omega}
C_1\langle\dot{p}^i\times(\Curl p)^i,\vec{n}\rangle dS
\,.\label{reduced-diss2}
\end{eqnarray}
In order to obtain a dissipation inequality in the spirit of
classical plasticity, we assume that the infinitesimal plastic
distortion $p$ satisfies the so-called {\it linearized insulation
condition} 
\begin{equation}
\sum_{i=1}^3\int_{\partial\Omega}
C_1\,\langle\frac{d}{dt}p^i\times(\Curl p)^i,\vec{n}\rangle
dS=0.\label{lin-sul}
\end{equation} 
We specify a sufficient condition for the linearized insulation
 boundary condition (see \cite{Gurtin05}), namely \begin{equation}\label{bc-plastic}
      p\times n|_{\partial\Omega}=0,
\end{equation}
which is called the micro-hard boundary condition.
Under (\ref{bc-plastic}), we then obtain the dissipation inequality
\begin{equation}
\int_{\Omega} [(B^T\sigma +\Sigma^{\mbox{\scriptsize
lin}}_{\Curl})\cdot \dot {z} +\hat g\cdot\dot{z}]dV\geq0
\, ,\label{diss-ineq}\end{equation}
where $$\Sigma^{\mbox{\scriptsize lin}}_{\Curl}
:=-C_1B^T\,\Curl\,\Curl p \quad\mbox{ and }\quad
\hat g:=-Lz\, .$$
Adapted to our situation, the plastic flow has the form
\begin{align}
  \partial_t z\in g(B^T\sigma-Lz-C_1B^T\Curl\Curl p)\, ,
\end{align}
where $g$ is a multivalued monotone flow function which is not necessary the subdifferential of a convex plastic potential (associative plasticity). 

We note that the micro-hard boundary (\ref{bc-plastic})  is the correct boundary condition 
for tensor fields in $L^2_{\Curl}(\Omega, {\cal M}^3)$ (see Notation for the definition of $L^2_{\Curl}$-space) which admits tangential traces.
We combine this with a new inequality extending Korn's inequality 
to incompatible tensor fields, namely
\begin{align}
\label{incompatible_korn}
\forall \, p\in L^2_{\Curl}(\Omega, {\cal M}^3): \quad & p\times n|_{\partial\Omega}=0:   \\
  & \underbrace{\|p\|_{L^2(\Omega)}}_{\text{plastic distortion}}\le C(\Omega)\, 
     \Big( \underbrace{\|\sym p\|_{L^2(\Omega)}}_{\text{plastic strain}}+ \underbrace{ \|\Curl p\|_{L^2(\Omega)}}_{\text{dislocation density}} \Big)\, .\notag
\end{align}
Here, the domain $\Omega$ needs to be {\bf sliceable}, i.e. cuttable into finitely many simply connected subdomains with Lipschitz boundaries. This inequality has been derived in \cite{Neff_Pauly_Witsch_cracad11, Neff_Pauly_Witsch_Korn_diff_forms_m2as12,Neff_Pauly_Witsch_Sbornik12
} and is precisely motivated by the well-posedness question for our model 
\cite{Neff_Chelminski07_disloc}. The inequality \eqref{incompatible_korn} 
expresses the fact that controlling the plastic strain $\sym p$ and the dislocation 
density $\Curl p$ in $L^2(\Omega, {\cal M}^3)$ gives a control of the plastic distortion $p$ in 
$L^2(\Omega, {\cal M}^3)$ provided the micro-hard boundary condition is specified. 

It is worthy to note that with $g$ only monotone and not necessarily a subdifferential the powerful energetic solution concept \cite{Lussardi08,Kratochvil10,Mielke09} cannot be applied. In this contribution we face the combined challenge of a gradient plasticity model based on the dislocation density tensor $\Curl p$ involving the plastic spin, a general non-associative monotone flow-rule and a rate-dependent response. 
\paragraph{Setting of the problem.}
Let $\Omega \subset \cR^3$ be an open bounded set, the set of material
points of the solid body, with a $C^1$-boundary $\partial\Omega$. By $T_e$ we denote a positive number (time of existence),
which can be chosen arbitrarily large,  and for $0 < t\leq T_e$
\begin{eqnarray}
{\Omega}_{t} = {\Omega} \times {(0, t)}. \non
\end{eqnarray} 
 Let $\mathfrak{sl}(3)$ be the set of all traceless $3 \times 3$--matrices,
 i.e. $$\mathfrak{sl}(3)=\{v\in{\cal M}^3\mid \Tr v=0\}.$$
 Unknown in our small strain formulation are the displacement $u(x,t) \in
\cR^3$ of the material point $x$ at time $t$ and the vector of the internal variables 
$z=(p,\gamma)$. Here, $p(x,t)\in\mathfrak{sl}(3)$ denotes the non-symmetric infinitesimal
plastic distortion and $\gamma(x,t)\in{\mathbb R}$ is the isotropic hardening variable. 
The model equations of the problem are
\begin{eqnarray}
- \di_x \sigma(x,t) &=&  b(x,t), \label{CurlPr1}
\\[1ex]
\sigma(x,t) &=& {\mathbb C}[x] ( \sym (\na_x u(x,t) - Bz(x,t) ) ),   \label{CurlPr2}
\\[1ex] 
\label{CurlPr3} \partial_t z(x,t) & \in & g \big(x,\Sigma^{\rm lin}(x,t)\big),  
\hspace{3ex} \Sigma^{\rm lin}=\Sigma^{\rm lin}_e+\Sigma^{\rm lin}_{\rm sh}
+\Sigma^{\rm lin}_{\rm curl},\label{microPr3} \\
\Sigma^{\rm lin}_{\rm e}&=&B^T\sigma,\hspace{1ex} \Sigma^{\rm lin}_{\rm sh}=-Lz,
\hspace{1ex} \quad \Sigma^{\rm lin}_{\rm curl}=- C_1B^T \Curl\Curl (Bz)\, ,\non
\end{eqnarray}
which must be satisfied in $\Omega \times [0,T_e)$. Here, $\Sigma^{\rm lin}$ is the infinitesimal Eshelby stress tensor driving the evolution of the plastic distortion $p$. The initial condition and Dirichlet boundary condition are  
\begin{eqnarray} 
z(x,0) &=& 0, \quad\quad\quad x \in \Omega,  \label{CurlPr4}   \\
Bz(x,t)\times n(x)&=&0,\hspace{7ex} (x,t) \in \partial\Omega \times
  [0,T_e), \label{CurlPr5}\\
u(x,t) &=& 0, \quad (x,t) \in \partial \Omega \times
  [0,T_e)\,,\label{CurlPr6}
\end{eqnarray}
where $n$ is a normal vector on the boundary $\partial\Omega$. For 
simplicity we consider only homogeneous boundary condition.
The elasticity tensor ${\mathbb C}[x]: {\cal S}^3 \rightarrow {\cal S}^3$ is a
linear, symmetric, uniformly positive definite mapping. The tensor
 ${\mathbb C}$ has measurable coefficients.
Classical linear isotropic hardening is included for $L\not=0$. We assume that the linear mapping
$L:{\mathbb R}^N\to{\mathbb R}^N$ is positive semi-definite and satisfies the inequality
\begin{eqnarray}
(Lz,z)\ge\alpha\|\gamma\|^2,\hspace{3ex}z=(p,\gamma)\in{\mathbb R}^N, \label{IsotropStuff}  
\end{eqnarray}
for some positive constant $\alpha\in{\mathbb R}$.
In the model equations, the nonlocal
 backstress contribution is given by the dislocation density motivated term 
$\Sigma^{\rm lin}_{\rm curl}=- C_1B^T \Curl\Curl p$ together with 
the corresponding micro-hard boundary condition (\ref{CurlPr5}).
For the model we require that the nonlinear constitutive mapping
$(v\mapsto g(\cdot, v)):{\mathbb R}^N \rightarrow 2^{{\mathbb R}^N}$ is monotone\footnote{Here 
$ 2^{{\mathbb R}^N}$ denotes the power set of ${\mathbb R}^N$.}, i.e. 
 it satisfies
\begin{eqnarray}
0 \leq (v_{1}-v_{2})\cdot (v^{*}_{1}-v^{*}_{2}), \label{monotype2}  
\end{eqnarray}
for all $v_i \in {\mathbb R}^N,\ v^{*}_i \in g(x,v_i),\ i=1,2$, and for a.e. $x\in\Omega$.  We also require
that
\begin{eqnarray}
0 \in g(x,0), \hspace{3ex}\text{a.e.}\ x\in{\Omega}.  \label{monotype1} 
\end{eqnarray}
The mapping $x\mapsto g(x, \cdot): \Omega \rightarrow 2^{{\mathbb R}^N}$ is measurable
(see \cite{Castaing77,Hu97,Pankov97} for the definition of 
the measurability of multi-valued maps).
Given are the volume force $b(x,t)\in \cR^3$. In this work we also assume that the function 
$g$ possesses
{\bf the self-controlling property}, i.e. there exists a continuous
function ${\cal F}: {\mathbb R}_+\times{\mathbb R}_+\to{\mathbb R}_+$ such that the
inequality 
\begin{eqnarray} 
 \|Bg(y)\|_2\le{\cal F}\left( \|Lg(y)\|_2,  \|y\|_2 \right)
 \label{SelfControlling}
\end{eqnarray}
holds for all $y\in L^2(\Omega, {\mathbb R}^N)$. The self-controlling property was first introduced
 by Chelminski in \cite{Chelminski98} for the study of inelastic models of monotone type and beyond that class.
 \begin{Rem} Visco-plasticity is typically included in the former conditions by choosing the function $g$ to be in Norton-Hoff form, i.e. 
\begin{align*}
  g(\Sigma)=[|{\Sigma}|-\yieldlimit]_+^r\,\frac{\Sigma}{|\Sigma|}\, , \quad\Sigma\in{\cal M}^3\, ,
\end{align*}  
where $\yieldlimit$ is the flow stress and $r$ is some parameter together with $[x]_+:=\max(x,0)$. If $g:{\cal M}^3\mapsto {\cal S}^3$ then the flow is called irrotational (no plastic spin).

In case of a non-associative flow rule, $g$ is not a subdifferential but may e.g. be written as
\begin{align*}
     g(\Sigma)= \mathcal{F}_1(\Sigma) \, \partial\mathcal{F}_2(\Sigma)\, ,
\end{align*}
where $\mathcal{F}_1$ describes the yield-function and $\mathcal{F}_2$ the flow direction.
 \end{Rem}

\begin{Rem}
{\rm It is well known that classical viscoplasticity (without gradient effects)
 gives rise to a well-posed problem. We extend this result to our formulation
  of rate-dependent gradient plasticity. The presence of the
  classical linear isotropic hardening in our model
  is related to $L\not=0$ whereas 
  the presence of the nonlocal gradient term is always related to $C_1>0$. }
\end{Rem}
\paragraph{Notation.} Throughout the whole work we choose the numbers 
$q, q^*$ satisfying the
following conditions 
\[1 < q, q^* < \infty \ \  {\rm and}\ \
1/q + 1/q^* = 1,\] 
and $|\cdot|$ denotes a norm in ${\mathbb R}^k$, $k\in{\mathbb N}$. We also assume for simplicity that $\Gamma_{\rm hard}=\partial\Omega$.
Moreover, the following notations are used in this work. 
The space $W^{m,q}(\Omega, \cR^k)$ with $q \in [1, \infty]$ consists
of all functions in $L^q(\Omega, \cR^k)$ with weak derivatives in
$L^q(\Omega, \cR^k)$ up to order $m$. If $m$ is not integer, then $W^{m,q}(\Omega, \cR^k)$ denotes 
the corresponding Sobolev-Slobodecki space. 
We set $H^m(\Omega, \cR^k)= W^{m,2}(\Omega, \cR^k)$. The norm in
$W^{m,q}(\Omega, {\mathbb R}^k)$ is denoted by $\| \cdot \|_{m,q,\Omega}$
($\| \cdot \|_{q}:=\| \cdot \|_{0,q,\Omega}$). 
The operator $\Gamma_0$ defined by
\[\Gamma_0: v\in W^{1,q}(\Omega, \cR^k)\mapsto W^{1-1/q,q}(\partial\Omega, {\mathbb R}^k)\]
  denotes the usual trace operator. The space $W^{m,q}_0(\Omega, \cR^k)$ with $q \in [1, \infty]$ consists
of all functions $v$ in $W^{m,q}(\Omega, \cR^k)$ with $\Gamma_0v=0$.
One can define the bilinear 
form on the product space
$L^{q}(\Omega, {\cal M}^3)$$\times$$L^{q^*}(\Omega, {\cal M}^3)$ by
\[
(\xi, \zeta )_{\Omega} = \int_{\Omega} \xi(x) \cdot \zeta(x) dx.
\]
 The space
\[L^q_{\Curl}(\Omega, {\cal M}^3)=\{v\in L^q(\Omega, {\cal M}^3)\mid
\Curl v\in L^q(\Omega, {\cal M}^3)\}\]
is a Banach space with respect to the norm
\[\|v\|_{q,\Curl}=\|v\|_{q}+\|\Curl v\|_{q}.\]
The well known result on the generalized trace operator can be easily
adopted to the functions with values in ${\cal M}^3$ 
(see \cite[Section II.1.2]{Sohr01}). Then, according
to this result, there is a bounded operator 
$\Gamma_n$ on $L^q_{\Curl}(\Omega, {\cal M}^3)$ 
\[\Gamma_n: v\in L^q_{\Curl}(\Omega, {\cal M}^3)
\mapsto\big(W^{1-1/{q^*},q^*}(\partial\Omega, {\cal M}^3)\big)^{*}\]
with \[\Gamma_n v=v\times n\big|_{\partial\Omega}\ {\rm if}\
v\in C^1(\bar{\Omega}, {\cal M}^3),\]
where $X^*$ denotes the dual of a Banach space $X$.
 Next,
\[L^{q}_{\Curl,0}(\Omega, {\cal M}^3)=
\{w\in L^{q}_{\Curl}(\Omega,{\cal M}^3)\mid \Gamma_n(w)=0\}.\]
We also define the space 
$Z^q_{\Curl}(\Omega, {\cal M}^3)$ by
\[Z^q_{\Curl}(\Omega, {\cal M}^3)=\{v\in L^q_{\Curl,0}(\Omega, {\cal M}^3)\mid
\Curl\Curl v\in L^q(\Omega, {\cal M}^3)\},\]
which is a Banach space with respect to the norm 
$$\|v\|_{Z^q_{\Curl}}=\|v\|_{q,\Curl}+\|\Curl\Curl v\|_{q}.$$ 
For
functions $v$ defined on $\Omega \times [0,\infty)$ we denote by
$v(t)$ the mapping $x \mapsto v(x,t)$, which is defined on $\Omega$.
 The space $L^q(0,T_e; X)$ denotes the Banach space of all Bochner-measurable 
functions $u:[0,T_e)\to X$ such that $t\mapsto\|u(t)\|^q_X$ is integrable
on $[0,T_e)$. Finally, we frequently use the spaces $W^{m,q}(0,T_e;X)$, 
which consist of Bochner measurable functions having $q$-integrable weak
derivatives up to order $m$.

\section{Preliminaries}\label{Operproperties}

\paragraph{Some properties of the $\Curl\Curl$-operator}
In this subsection we present some results concerning the $\Curl\Curl$-operator, which 
are relevant to the further investigations. For the $\Curl\Curl$-operator
with a slightly different domain of definition similar results are obtained in 
\cite[Section 4]{NesenenkoNeff2011}. Here we adopt the results in 
\cite{NesenenkoNeff2011} to our purposes. 
\begin{Lem}[Self-adjointness of $\Curl\Curl$]\label{SelfadjointCurlCurl} 
Let $\Omega\subset \cR^3$ be an open bounded set with a Lipschitz boundary and
$A:L^2(\Omega, {\cal M}^3)\to L^2(\Omega, {\cal M}^3)$ 
be the linear operator defined by
\[Av=\Curl\Curl v\]
with $\dom(A)=Z^2_{\Curl}(\Omega,{\cal M}^3)$. The operator $A$ is selfadjoint and
non-negative.
\end{Lem}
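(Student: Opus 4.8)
The plan is to establish self-adjointness via the standard route for differential operators in divergence-like form: first show that $A$ is symmetric and non-negative on its domain, then show that $A$ is already self-adjoint by identifying $\dom(A^*)$ with $\dom(A)=Z^2_{\Curl}(\Omega,{\cal M}^3)$. The key integration-by-parts identity is the vectorial Green formula for the $\Curl$-operator: for sufficiently smooth $v,w$,
\begin{equation*}
(\Curl v, w)_\Omega - (v, \Curl w)_\Omega = \sum_{i=1}^3 \int_{\partial\Omega} \langle (v^i\times w^i), n\rangle\, dS,
\end{equation*}
applied row-by-row in ${\cal M}^3$, exactly as in the derivation of \eqref{reduced-diss2}. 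Applying this twice, once to the pair $(\Curl v, w)$ and once to $(v,\Curl w)$, and using that the tangential traces $v\times n$ and $w\times n$ vanish on $\partial\Omega$ for $v,w\in Z^2_{\Curl}(\Omega,{\cal M}^3)$ (this is built into the definition of $L^2_{\Curl,0}$), all boundary terms drop out and we obtain
\begin{equation*}
(Av, w)_\Omega = (\Curl\Curl v, w)_\Omega = (\Curl v, \Curl w)_\Omega = (v, \Curl\Curl w)_\Omega = (v, Aw)_\Omega
\end{equation*}
for all $v,w\in Z^2_{\Curl}(\Omega,{\cal M}^3)$. In particular $A$ is symmetric, and taking $w=v$ gives $(Av,v)_\Omega = \|\Curl v\|_2^2 \geq 0$, so $A$ is non-negative. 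One subtlety: the Green formula in the form above holds classically for $C^1(\bar\Omega)$ fields; to apply it on all of $Z^2_{\Curl}$ one invokes the bounded generalized trace operator $\Gamma_n$ described in the Notation (adapted from \cite{Sohr01}) together with a density argument, and reads the boundary pairing as the duality pairing between $(W^{1-1/q^*,q^*}(\partial\Omega))^*$ and $W^{1-1/q^*,q^*}(\partial\Omega)$ with $q=q^*=2$.

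It remains to prove that $A$ is self-adjoint, i.e. that $\dom(A^*)\subseteq\dom(A)$ (the reverse inclusion and the agreement of the operators follow from symmetry). So let $w\in\dom(A^*)$, meaning there is $f=A^*w\in L^2(\Omega,{\cal M}^3)$ with $(Av,w)_\Omega = (v,f)_\Omega$ for all $v\in Z^2_{\Curl}(\Omega,{\cal M}^3)$. Testing against all $v\in C_c^\infty(\Omega,{\cal M}^3)$ first shows that $\Curl\Curl w = f$ holds in the sense of distributions, hence $w\in L^2(\Omega,{\cal M}^3)$ with $\Curl\Curl w\in L^2(\Omega,{\cal M}^3)$; in particular $\Curl w$ is well-defined as a distribution and one must next argue $\Curl w\in L^2$. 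Then, using the distributional identity and integrating by parts against general $v\in Z^2_{\Curl}$, the interior terms cancel and one is left with the boundary duality pairings involving $v\times n$ (which vanishes) and $\Curl v \times n$ (which is arbitrary in the appropriate trace space as $v$ ranges over $Z^2_{\Curl}$); forcing these to vanish yields the boundary conditions $w\times n = 0$ and $\Gamma_n(w)=0$ that place $w\in Z^2_{\Curl}(\Omega,{\cal M}^3) = \dom(A)$. Since the analogous statement with a modified domain is proved in \cite[Section 4]{NesenenkoNeff2011}, I would follow that argument and merely indicate the adaptation to the present domain $Z^2_{\Curl}$.

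The main obstacle is the last step: showing $\dom(A^*)\subseteq \dom(A)$ cleanly requires controlling $\Curl w$ in $L^2$ for an element $w$ about which we a priori only know $w\in L^2$ and $\Curl\Curl w\in L^2$ (distributionally), and then making rigorous sense of the boundary integration by parts. The difficulty is genuinely a regularity/trace-theory issue rather than an algebraic one: one needs a Helmholtz-type or potential decomposition of $w$ (splitting off a curl-free part, which $\Curl\Curl$ annihilates) to upgrade regularity and to interpret the traces — this is exactly where the geometric hypothesis on $\Omega$ (bounded, Lipschitz; and, for the related Korn inequality, sliceability) enters. I expect to handle this by citing the corresponding machinery from \cite{NesenenkoNeff2011} and checking that nothing in their argument uses more than a Lipschitz boundary, so that the statement as quoted — Lipschitz $\Omega$ — is justified.
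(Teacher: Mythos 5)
Your proof of symmetry and non-negativity via the vectorial Green formula is fine, but you have correctly located the hard part and then not closed it: you need $\dom(A^*)\subseteq\dom(A)$, and the obstacle you name — given $w\in L^2$ with $\Curl\Curl w\in L^2$ distributionally, show $\Curl w\in L^2$ and that the tangential trace of $w$ vanishes — is genuine. This is not an elliptic-regularity problem that can be shrugged off: $\Curl\Curl$ annihilates all gradients, so no regularity gain is to be had from $\Curl\Curl w\in L^2$ alone, and a Helmholtz splitting by itself does not tell you that the divergence-free component has square-integrable $\Curl$. A proof that proceeds along the lines you sketch would need a substantial, non-obvious argument at exactly this point, and "follow \cite{NesenenkoNeff2011} and check nothing uses more than Lipschitz" is not that argument.

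The paper sidesteps the entire difficulty with a different and much cleaner idea: factor $A$ through a first-order operator. Define $S\colon L^2_{\Curl,0}(\Omega,{\cal M}^3)\subset L^2\to L^2$, $Sv=\Curl v$. This $S$ is densely defined and closed, and its adjoint is $S^*v=\Curl v$ with $\dom(S^*)=L^2_{\Curl}(\Omega,{\cal M}^3)$ (the micro-hard boundary condition on $\dom(S)$ is exactly what makes $\dom(S^*)$ boundary-condition-free). Then $\dom(S^*S)=\{v\in\dom(S)\mid Sv\in\dom(S^*)\}=Z^2_{\Curl}(\Omega,{\cal M}^3)$ and $S^*Sv=\Curl\Curl v$, so $A=S^*S$. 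By Kato's Theorem V.3.24 (the von Neumann theorem), $S^*S$ is automatically self-adjoint whenever $S$ is densely defined and closed; non-negativity then follows from $(Av,v)_\Omega=(Sv,Sv)_\Omega\ge0$. The payoff of this factorization is precisely that the question of whether $\dom(A^*)$ is no larger than $Z^2_{\Curl}$ — the step your attempt left hanging — is absorbed into an abstract operator-theoretic theorem and never needs to be argued by hand via traces or decompositions. Your route, by contrast, would have to redo a substantial piece of trace theory for $L^2$ fields with square-integrable $\Curl\Curl$ but a priori non-square-integrable $\Curl$, which is harder than the lemma itself.
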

\begin{proof} Indeed, let us consider first the following linear closed operator
$S:L^2(\Omega, {\cal M}^3)\to L^2(\Omega, {\cal M}^3)$ defined by
\[Sv=\Curl v, \hspace{5ex} v\in\dom(S)=L^2_{\Curl,0}(\Omega,{\cal M}^3).\]
It is easily seen that its adjoint is given by
\[S^*v=\Curl v, \hspace{5ex} v\in\dom(S^*)=L^2_{\Curl}(\Omega,{\cal M}^3).\]
Then, by Theorem V.3.24 in \cite{Kato66}, the operator $S^*S$ with
\[\dom(S^*S)=\{v\in\dom(S)\mid Sv\in\dom(S^*)\},\] which is exactly the
operator $A$, is self-adjoint in $L^2(\Omega,{\cal M}^3)$. The non-negativity of
$A$ follows from its representation by the operator $S$, i.e. $A=S^*S$,
and the identity
\[(Av,u)_\Omega=(S^*Sv,u)_\Omega=(Sv,Su)_\Omega,\]
which holds for all $v\in\dom(A)$ and $u\in\dom(S)$.
\end{proof}
\begin{Corol}\label{MaxMonCurlCurl}
The operator $A:L^2(\Omega, {\cal M}^3)\to L^2(\Omega, {\cal M}^3)$ defined in 
Lemma~\ref{SelfadjointCurlCurl} is 
maximal monotone.
\end{Corol}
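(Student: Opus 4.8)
The plan is to obtain maximal monotonicity of $A$ as a direct consequence of Lemma~\ref{SelfadjointCurlCurl} together with standard Hilbert-space operator theory. Recall that for a densely defined linear operator on a Hilbert space, monotonicity is exactly non-negativity, $(Av,v)_\Omega\ge 0$ for all $v\in\dom(A)$, which Lemma~\ref{SelfadjointCurlCurl} provides; moreover a self-adjoint operator is by definition densely defined, so $\dom(A)=Z^2_{\Curl}(\Omega,{\cal M}^3)$ is dense in $L^2(\Omega,{\cal M}^3)$. It then remains only to verify the range condition $R(I+A)=L^2(\Omega,{\cal M}^3)$, after which the classical characterization (Minty) of maximal monotone operators, "a monotone operator $A$ is maximal monotone iff $R(I+A)=H$", yields the claim.

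First I would use that, $A$ being self-adjoint and non-negative, its spectrum satisfies $\sigma(A)\subseteq[0,\infty)$; in particular $-1$ lies in the resolvent set of $A$, so $I+A\colon\dom(A)\to L^2(\Omega,{\cal M}^3)$ is a bijection with bounded inverse $(I+A)^{-1}$. This gives $R(I+A)=L^2(\Omega,{\cal M}^3)$, and combining with monotonicity of $A$ delivers maximal monotonicity. Equivalently, one may simply invoke the general fact that every non-negative self-adjoint operator on a Hilbert space is maximal monotone (see e.g. Brezis), so that the corollary follows at once from Lemma~\ref{SelfadjointCurlCurl}.

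I do not expect any real obstacle here: the statement is an immediate corollary of the self-adjointness and non-negativity established above, and the only point requiring (routine) care is the surjectivity of $I+A$, which is handled by the spectral theorem for self-adjoint operators via the inclusion $\sigma(A)\subseteq[0,\infty)$.
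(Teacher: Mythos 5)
Your proof is correct, but it follows a genuinely different route from the paper. The paper invokes a theorem of Brezis (\cite[Theorem 1]{Brez70}) characterizing maximal monotone \emph{linear} operators: a densely defined, closed, monotone linear operator whose adjoint is also monotone is maximal monotone. Since Lemma~\ref{SelfadjointCurlCurl} gives $A=A^*\ge 0$, both $A$ and $A^*$ are monotone and the criterion applies immediately, with no range condition to check. You instead go through Minty's surjectivity criterion $R(I+A)=L^2(\Omega,{\cal M}^3)$ and verify it via the spectral theorem: self-adjointness and non-negativity give $\sigma(A)\subseteq[0,\infty)$, hence $-1\in\rho(A)$, hence $I+A$ is a bijection. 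Both arguments are sound and short. The paper's route avoids spectral theory altogether and works directly at the level of the operator's algebraic structure (monotonicity of the adjoint), which is why it is stated as an almost tautological corollary; your route is more self-contained if one only wants to rely on the spectral theorem and the most fundamental characterization of maximality, and it also makes explicit \emph{why} the resolvent exists. Your closing remark---that every non-negative self-adjoint operator on a Hilbert space is maximal monotone---is the cleanest encapsulation and is exactly the content of the Brezis result the paper cites, so the two proofs ultimately converge on the same standard fact, just via different intermediate lemmas.
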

\begin{proof}
According to the result of Brezis (see \cite[Theorem 1]{Brez70}), 
a linear monotone operator  $A$ is
 maximal monotone, if it is a densely defined closed  operator
 such that its adjoint $A^*$ is monotone. The statement of the corollary follows then
directly from Lemma~\ref{SelfadjointCurlCurl} and the mentioned result of Brezis.
\end{proof}
\paragraph{Boundary value problems.}
Let $\Omega\subset \cR^3$ be an open bounded set with a Lipschitz boundary.
For every $v\in L^2(\Omega, {\cal M}^3)$ we define a functional $\Psi$ on
$L^2(\Omega, {\cal M}^3)$ by
\[\Psi(v)=\begin{cases}\frac12\int_\Omega|\Curl v(x)|^2dx, & v\in 
                                      L^2_{\Curl,0}(\Omega,{\cal M}^3)\\
                       +\infty, & {\rm otherwise}
  \end{cases}.\]
It is easy to check that $\Psi$ is proper, convex, lower semi-continuous.
The next lemma gives a precise description of the subdifferential 
$\partial\Psi$.
\begin{Lem} \label{CurlCurl}
We have that $\partial\Psi=\Curl\Curl$ with
\[\dom(\partial\Psi)=Z^2_{\Curl}(\Omega,{\cal M}^3).\]
\end{Lem}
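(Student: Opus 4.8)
The plan is to identify $\partial\Psi$ with the operator $A = \Curl\Curl$ from Lemma~\ref{SelfadjointCurlCurl} by using the functional-analytic representation $\Psi(v) = \tfrac12\|Sv\|^2$ obtained from the closed operator $Sv = \Curl v$ with $\dom(S) = L^2_{\Curl,0}(\Omega,{\cal M}^3)$, and then invoking the standard fact that the subdifferential of $v\mapsto\tfrac12\|Sv\|^2$ is exactly $S^*S$ (see e.g. Brezis, \emph{Op\'erateurs maximaux monotones}). Concretely, I would first observe that $\Psi(v) = \tfrac12\|Sv\|^2$ for $v\in\dom(S)$ and $\Psi(v)=+\infty$ otherwise, so $\Psi$ is the composition of the lower semicontinuous convex function $w\mapsto\tfrac12\|w\|^2$ on $L^2(\Omega,{\cal M}^3)$ with the closed linear operator $S$.

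Next I would prove the two inclusions. For $\partial\Psi \supseteq A$: if $v\in\dom(A)=Z^2_{\Curl}(\Omega,{\cal M}^3)$, then $v\in\dom(S)$, $Sv\in\dom(S^*)$, and for any $u\in\dom(S)$ the identity $(Av,u-v)_\Omega = (S^*Sv, u-v)_\Omega = (Sv, Su - Sv)_\Omega$ together with $(Sv, Su - Sv) \le \tfrac12\|Su\|^2 - \tfrac12\|Sv\|^2$ (convexity of the square norm) gives $\Psi(u) - \Psi(v) \ge (Av, u-v)_\Omega$; for $u\notin\dom(S)$ this is trivial since $\Psi(u)=+\infty$. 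Hence $Av\in\partial\Psi(v)$. For the reverse inclusion $\partial\Psi\subseteq A$, I would use that $\partial\Psi$ is monotone, that $A$ is maximal monotone by Corollary~\ref{MaxMonCurlCurl}, and that $A\subseteq\partial\Psi$; a maximal monotone operator admits no proper monotone extension, so $\partial\Psi = A$, which simultaneously yields $\dom(\partial\Psi) = \dom(A) = Z^2_{\Curl}(\Omega,{\cal M}^3)$.

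The main obstacle — or rather the point requiring the most care — is the direction $\partial\Psi\subseteq A$: one must be sure that $\dom(\partial\Psi)$ is not larger than $Z^2_{\Curl}(\Omega,{\cal M}^3)$, i.e. that no $v$ with $\Curl\Curl v\notin L^2$ or without vanishing tangential trace can have a nonempty subdifferential. The maximal monotonicity argument handles this cleanly and is the reason Corollary~\ref{MaxMonCurlCurl} was recorded; an alternative, more hands-on route would characterize $w^*\in\partial\Psi(v)$ directly via the inequality $(w^*, u - v)_\Omega \le \Psi(u) - \Psi(v)$ for all $u$, test with $u = v + t\varphi$ for $\varphi\in C^\infty_c(\Omega,{\cal M}^3)$ to get $w^* = \Curl\Curl v$ in the distributional sense (hence $\Curl\Curl v\in L^2$), and then test with general $u\in L^2_{\Curl,0}$ and integrate by parts to extract the boundary condition $\Gamma_n(\Curl v) = 0$ forcing $v\in Z^2_{\Curl}(\Omega,{\cal M}^3)$. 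I would present the short maximal-monotone argument as the proof and perhaps remark on the direct computation.
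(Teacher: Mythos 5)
Your proposal is correct and follows essentially the same route as the paper: establish $A=\Curl\Curl\subseteq\partial\Psi$ via the integration-by-parts identity $(\Curl\Curl v,w)_\Omega=(\Curl v,\Curl w)_\Omega$ and convexity of $w\mapsto\tfrac12\|w\|^2$, then close the inclusion by invoking the maximal monotonicity of $A$ from Corollary~\ref{MaxMonCurlCurl}. One point in your favor: you verify the subgradient inequality $\Psi(u)-\Psi(v)\ge(Av,u-v)_\Omega$ for all $u\in\dom(S)=L^2_{\Curl,0}(\Omega,{\cal M}^3)$ (and note it is trivial for $u\notin\dom(S)$), which is what $Av\in\partial\Psi(v)$ actually requires; the paper only records the inequality for $w\in\dom(A)=Z^2_{\Curl}(\Omega,{\cal M}^3)$, so yours is the tighter version of the same argument.
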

\begin{proof} Let $A:L^2(\Omega, {\cal M}^3)\to L^2(\Omega, {\cal M}^3)$ 
be the linear
operator defined by
\[Av=\Curl\Curl v\]
and $\dom(A)=Z^2_{\Curl}(\Omega,{\cal M}^3)$. Due to Lemma~\ref{SelfadjointCurlCurl}, 
the following identity 
\begin{eqnarray}
\label{monotonyCurlCurl}\int_\Omega\Curl\Curl v(x)\cdot w(x)dx=
\int_\Omega\Curl v(x)\cdot\Curl w(x)dx
\end{eqnarray}
holds for any $v,w\in Z^2_{\Curl}(\Omega,{\cal M}^3)$. 
Therefore, using (\ref{monotonyCurlCurl}) we obtain
\[\int_\Omega\Curl\Curl v\cdot(w-v)dx=\int_\Omega\Curl v\cdot(\Curl w-\Curl v)dx\le\Psi(w)-\Psi(v)\]
for every $v,w\in\dom(A)$. This shows that
$A\subset\partial\Psi$. Since $A$ is maximal monotone 
(see Corollary~\ref{MaxMonCurlCurl}) we conclude that $A=\partial\Psi$.
\end{proof}
\paragraph{Helmholtz's projection.}
\label{HelmholtzProjectors}
In the linear elasticity theory it is well known 
(see \cite[Theorem 10.15]{Giusti2003}) 
that a Dirichlet boundary value problem formed by the equations
\begin{eqnarray}
\label{elast1} -\di_x \sigma(x) &=& \hat{b}(x), \hskip3.6cm \ x \in \Omega,\\
\label{elast2} \sigma(x) &=&  {\mathbb C}[x]( \sym
 {({ \nabla _x u(x)})}-{\hat\varepsilon}_p(x)),  \quad  \quad x \in \Omega,\\
 \label{elast3} u(x) &=&0, \hskip3.5cm  \ x \in {
\partial \Omega},
\end{eqnarray}
to given $\hat{b} \in W^{-1,q}( \Omega, {\mathbb R}^3)$ and
${\hat \varepsilon}_p \in L^q( \Omega, {\cal S}^3)$ has a unique
weak solution $( u, \sigma ) \in  W^{1,q}_0( \Omega, {\mathbb R}^3)
 \times L^q ( \Omega, {\cal S}^3)$ provided the open set $\Omega$
has a $C^1$-boundary and ${\mathbb C}$ is continuous on $\bar\Omega$. 
Here the number $q$ satisfies $1<q<\infty$. The
solution of (\ref{elast1}) - (\ref{elast3}) satisfies the inequality
\[ \|   \sym ( \nabla _x u ) \|_{q} 
\leq  C ( \|  \hat {\varepsilon }_p  \|_{q}+\|  \hat b  \|_{-1,q})\]
with some positive constant $C$. 
\begin{Def}\label{defProj}
For every $\hat{\varepsilon}_p \in L^q(\Omega , {\cal S}^3)$ we
define a linear operator $P_q : L^q(\Omega, {\cal S}^3) \to
L^q(\Omega , {\cal S}^3)$ by
\[P_q \hat{\varepsilon}_p := \sym ( \nabla _x u), \]
where $u \in W_0^{1,q}(\Omega , \cR^3)$ is the unique weak solution of
(\ref{elast1}) - (\ref{elast3}) to the given function
$\hat{\varepsilon}_p$ and $\hat{b}=0$.
\end{Def}
Next, a subset ${\cal G}^q$ of $L^q(\Omega, {\cal S}^3)$ is defined by
\[{\cal G}^q= \{\sym ( \nabla _x u)\ | \ 
u \in W_0^{1,q}(\Omega , \cR^3)\}. \]
The main properties of $P_q$ are stated in the following lemma.
\begin{Lem}\label{LemmaProj}
For every $1 < q < \infty$ the operator $P_q$ is a bounded
projector onto the subset ${\cal G}^q$ of $L^q(\Omega, {\cal
S}^3)$. The projector $(P_q)^*$ adjoint with respect to the
bilinear form $[ \xi , \zeta ]_{\Omega}:=( {\mathbb C}\xi , \zeta )_{\Omega}$ on $L^q(\Omega, {\cal
S}^3) \times L^{q^*}(\Omega, {\cal S}^3)$ satisfies
\[ (P_q)^* = P_{q^*}, \ \ {where} \ \ \frac{1}{q^*} + \frac{1}{q} = 1. \]
\end{Lem}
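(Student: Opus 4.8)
The plan is to verify the three claimed properties of $P_q$ in turn: that it is bounded, that it is idempotent with range exactly $\mathcal{G}^q$, and that its adjoint with respect to the energy bilinear form $[\cdot,\cdot]_\Omega$ is $P_{q^*}$. Boundedness is immediate from the a priori estimate stated just above Definition \ref{defProj}: taking $\hat b = 0$ there gives $\|P_q\hat\varepsilon_p\|_q = \|\sym(\nabla_x u)\|_q \le C\|\hat\varepsilon_p\|_q$, so $P_q$ is a bounded linear operator on $L^q(\Omega,\mathcal{S}^3)$ (linearity is clear since the elastic problem \eqref{elast1}--\eqref{elast3} is linear and its weak solution unique).

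Next I would show $P_q$ is a projector onto $\mathcal{G}^q$. That $P_q\hat\varepsilon_p \in \mathcal{G}^q$ for every $\hat\varepsilon_p$ is built into the definition. For idempotency, let $\eta = \sym(\nabla_x v) \in \mathcal{G}^q$ with $v \in W^{1,q}_0(\Omega,\cR^3)$; I must check $P_q\eta = \eta$. By definition $P_q\eta = \sym(\nabla_x u)$ where $u \in W^{1,q}_0$ solves $-\di_x\big(\mathbb{C}(\sym\nabla_x u - \eta)\big) = 0$ in the weak sense. But $u = v$ solves this problem, since then $\sym\nabla_x u - \eta = 0$ and the stress vanishes identically; by uniqueness of the weak solution $u = v$, hence $P_q\eta = \sym(\nabla_x v) = \eta$. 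This simultaneously shows $P_q$ restricted to $\mathcal{G}^q$ is the identity, so $P_q^2 = P_q$ and the range of $P_q$ is exactly $\mathcal{G}^q$.

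The substantive part is the adjoint identity $(P_q)^* = P_{q^*}$ with respect to $[\xi,\zeta]_\Omega = (\mathbb{C}\xi,\zeta)_\Omega$. I would argue as follows. Fix $\hat\varepsilon_p \in L^q(\Omega,\mathcal{S}^3)$ and $\hat\zeta \in L^{q^*}(\Omega,\mathcal{S}^3)$, and let $u \in W^{1,q}_0$, $w \in W^{1,q^*}_0$ be the weak solutions defining $P_q\hat\varepsilon_p = \sym\nabla_x u$ and $P_{q^*}\hat\zeta = \sym\nabla_x w$. I want to show
\[
[\,P_q\hat\varepsilon_p,\ \hat\zeta\,]_\Omega = [\,\hat\varepsilon_p,\ P_{q^*}\hat\zeta\,]_\Omega,
\]
i.e. $\big(\mathbb{C}\,\sym\nabla_x u,\ \hat\zeta\big)_\Omega = \big(\mathbb{C}\,\hat\varepsilon_p,\ \sym\nabla_x w\big)_\Omega$. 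The key is the weak formulation: $u$ satisfies $\big(\mathbb{C}(\sym\nabla_x u - \hat\varepsilon_p),\ \sym\nabla_x \varphi\big)_\Omega = 0$ for all $\varphi \in W^{1,q^*}_0$, and symmetrically $w$ satisfies $\big(\mathbb{C}(\sym\nabla_x w - \hat\zeta),\ \sym\nabla_x \psi\big)_\Omega = 0$ for all $\psi \in W^{1,q}_0$. Testing the $u$-equation with $\varphi = w$ and the $w$-equation with $\psi = u$, and using the symmetry of $\mathbb{C}$ so that $(\mathbb{C}\sym\nabla_x u,\sym\nabla_x w)_\Omega = (\mathbb{C}\sym\nabla_x w,\sym\nabla_x u)_\Omega$, the two identities combine to give $(\mathbb{C}\hat\varepsilon_p,\sym\nabla_x w)_\Omega = (\mathbb{C}\sym\nabla_x u,\sym\nabla_x w)_\Omega = (\mathbb{C}\sym\nabla_x u,\hat\zeta)_\Omega$, which is exactly the desired identity. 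Since this holds for all $\hat\varepsilon_p$ and $\hat\zeta$, and $P_{q^*}$ is bounded, it identifies $(P_q)^*$ with $P_{q^*}$. The one point requiring a little care — and the place I expect the only real friction — is the duality bookkeeping: checking that the test functions used lie in the correct spaces ($w \in W^{1,q^*}_0$ is an admissible test function for the $W^{1,q}$-problem and vice versa), that all the pairings $(\mathbb{C}\,\cdot,\cdot)_\Omega$ are finite by Hölder with exponents $q,q^*$, and that the abstract adjoint with respect to the nonstandard bilinear form $[\cdot,\cdot]_\Omega$ is well-defined on all of $L^{q^*}(\Omega,\mathcal{S}^3)$. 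These are routine given the boundedness and the a priori estimate, so no genuine obstacle arises.
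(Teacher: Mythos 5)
The paper does not supply a proof of Lemma~\ref{LemmaProj}; it is stated without a \verb|proof| environment and is implicitly treated as standard (compare the reference to \cite{AlCh02} just below, where Corollary~\ref{nonnegative} is deferred to that source). So there is no paper proof to compare against; your task reduces to checking whether the argument is sound. It is. Boundedness and linearity follow from the a priori estimate with $\hat b=0$ and the linearity/uniqueness of (\ref{elast1})--(\ref{elast3}); idempotency and $\operatorname{ran}P_q=\mathcal{G}^q$ are handled correctly by observing that $u=v$ solves the problem with data $\hat\varepsilon_p=\sym\nabla_x v$ (the resulting stress vanishes identically) and invoking uniqueness; and the adjoint identity is exactly the ``test each problem with the other's solution'' argument, where the crucial exponent matching ($w\in W^{1,q^*}_0$ is admissible in the weak $u$-equation and $u\in W^{1,q}_0$ in the weak $w$-equation) is noted and the symmetry of $\mathbb{C}$ is used twice to pass $\mathbb{C}$ across the $L^2$-pairing. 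This is the canonical proof, and your flag about ``duality bookkeeping'' is the right place to be careful, but the bookkeeping checks out: all pairings are finite by H\"older with $\mathbb{C}$ bounded, and the bilinear form $[\cdot,\cdot]_\Omega$ is non-degenerate on $L^q\times L^{q^*}$ because $\mathbb{C}$ is uniformly positive definite, so the adjoint is unambiguously identified with $P_{q^*}$.
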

Due to Lemma~\ref{LemmaProj}  the following projection operator
\[ Q_q = (I - P_q): L^q(\Omega, {\cal S}^3) \to L^q(\Omega, {\cal S}^3) \]
 is well-defined and generalizes the classical Helmholtz projection.

Let $L:{\mathbb R}^N\to{\mathbb R}^N$ be a linear, positive semi-definite mapping.
The next result is needed for the subsequent analysis.
\begin{Corol}\label{nonnegative}
The operator $B^T\sym{\mathbb C}Q_2\sym B+L:  L^2(\Omega, {\mathbb R}^N) \to {L^2}(\Omega, {\mathbb R}^N)$
is non-negative and self-adjoint.
\end{Corol}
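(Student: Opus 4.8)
The plan is to handle the two summands separately. Since $L\colon{\mathbb R}^N\to{\mathbb R}^N$ is symmetric and positive semi-definite, the multiplication operator it induces on $L^2(\Omega,{\mathbb R}^N)$ is bounded, self-adjoint and non-negative for the standard $L^2$-pairing $(\cdot,\cdot)_\Omega$, so it suffices to prove the same two properties for $T:=B^T\sym{\mathbb C}Q_2\sym B$. I will obtain both by rewriting $(Tz,w)_\Omega$ in a form that is manifestly symmetric in $z,w$ and manifestly non-negative when $z=w$.

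First I would unfold the definition of $T$. For $z,w\in L^2(\Omega,{\mathbb R}^N)$, using that $B^T$ is the transpose of $B$ for the standard inner products on ${\mathbb R}^N$ and ${\cal M}^3$, and that $\sym$ is the orthogonal projection of ${\cal M}^3$ onto ${\cal S}^3$ (hence self-adjoint and idempotent), one gets
\[
(Tz,w)_\Omega=(\sym{\mathbb C}Q_2\sym Bz,\,Bw)_\Omega=({\mathbb C}Q_2\,\varepsilon_z,\,\varepsilon_w)_\Omega,\qquad \varepsilon_z:=\sym Bz,\quad\varepsilon_w:=\sym Bw,
\]
with $\varepsilon_z,\varepsilon_w\in L^2(\Omega,{\cal S}^3)$. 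Thus everything is reduced to the operator ${\mathbb C}Q_2$ acting on plastic strains.

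Next I would invoke Lemma~\ref{LemmaProj}. Because ${\mathbb C}$ is symmetric and (uniformly) positive definite, $[\xi,\zeta]_\Omega:=({\mathbb C}\xi,\zeta)_\Omega$ is an inner product on $L^2(\Omega,{\cal S}^3)$, and Lemma~\ref{LemmaProj} with $q=q^*=2$ says precisely that $P_2$ is $[\cdot,\cdot]_\Omega$-self-adjoint; hence so is $Q_2=I-P_2$, and $Q_2$ is idempotent with $Q_2P_2=0$ since $P_2$ is a projector. Splitting $\varepsilon_w=Q_2\varepsilon_w+P_2\varepsilon_w$ and using $[Q_2\varepsilon_z,P_2\varepsilon_w]_\Omega=[\varepsilon_z,Q_2P_2\varepsilon_w]_\Omega=0$ yields
\[
(Tz,w)_\Omega=[Q_2\varepsilon_z,\,\varepsilon_w]_\Omega=[Q_2\varepsilon_z,\,Q_2\varepsilon_w]_\Omega .
\]
The symmetry of ${\mathbb C}$ makes the right-hand side symmetric in $z$ and $w$, so the bounded, everywhere defined operator $T$ is self-adjoint; the positive definiteness of ${\mathbb C}$ gives $(Tz,z)_\Omega=[Q_2\varepsilon_z,Q_2\varepsilon_z]_\Omega\ge0$, so $T$ is non-negative. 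Adding the self-adjoint non-negative multiplication operator induced by $L$ then completes the argument.

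The only delicate point — and the place where one should be careful — is keeping straight which inner product each self-adjointness statement refers to: the manipulations involving $B^T$ and $\sym$ use the plain $L^2$-pairing $(\cdot,\cdot)_\Omega$, whereas the self-adjointness and idempotency of $P_2$ and $Q_2$ coming from Lemma~\ref{LemmaProj} are relative to the ${\mathbb C}$-weighted pairing $[\cdot,\cdot]_\Omega$. Once this bookkeeping is done, no estimate beyond the positive (semi-)definiteness of ${\mathbb C}$ and $L$ is needed.
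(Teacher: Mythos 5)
Your argument is correct, and it is more informative than what the paper provides: the paper states Corollary~\ref{nonnegative} and simply refers the reader to an earlier paper of Alber and Che{\l}mi\'nski for the proof, giving no argument of its own. Your derivation is the natural one given the ingredients already set up in this paper: pass the plain-$L^2$ adjoints of $B$ and $\sym$ across to reduce $T:=B^T\sym\mathbb{C}Q_2\sym B$ to the statement $(Tz,w)_\Omega=[Q_2\varepsilon_z,\varepsilon_w]_\Omega$ with $\varepsilon_z=\sym Bz$, then invoke Lemma~\ref{LemmaProj} with $q=q^*=2$ to obtain $[\cdot,\cdot]_\Omega$-self-adjointness of $P_2$, hence of $Q_2=I-P_2$, and use idempotency $Q_2 P_2=0$ to land on the manifestly symmetric, non-negative expression $[Q_2\varepsilon_z,Q_2\varepsilon_w]_\Omega$. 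You are also right to flag the bookkeeping issue: the self-adjointness of $Q_2$ is with respect to the $\mathbb{C}$-weighted pairing, while the claimed self-adjointness of $T+L$ is with respect to the plain $L^2$-pairing, and the chain of identities you wrote is exactly what bridges the two. One small point worth making explicit rather than implicit: the paper only says $L$ is linear and positive semi-definite, but since it uses $L^{1/2}$ throughout the a priori estimates, symmetry of $L$ is clearly intended; your treatment of $L$ as a bounded self-adjoint non-negative multiplication operator on $L^2(\Omega,\mathbb{R}^N)$ is then exactly right.
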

For the proof of this result the reader is referred to \cite{AlCh02}.



\section{Existence of strong solutions}\label{Existence}

In this section we prove the main existence result for (\ref{CurlPr1}) - (\ref{CurlPr6}).
To show the existence of weak solutions a time-discretization
method is used in this work. In the first step, we prove the existence of the solutions of
the time-discretized problem in an appropriate Hilbert spaces based on the
Helmholtz projection in $L^2(\Omega, {\cal S}^3)$ (Section~\ref{HelmholtzProjectors}) 
and the monotone operator methods. In order to be able to apply 
the monotone operator method to the time-discretized problem we regularize it
by a linear positive definite term. In the second step,
we derive the uniform a priori estimates for the solutions of the time-discretized problem
using the polynomial growth of
 the function $g$ (see Definition~\ref{CoercClass} below) and then 
 we pass to the weak limit in the
equivalent formulation of the time-discretized problem employing the weak lower
semi-continuity of lower semi-continuous convex functions and the maximal monotonicity
of $g$.
\paragraph{Main result.} First, we define a class of maximal monotone functions we deal with
in this work.
\begin{Def}\label{CoercClass}
For $m\in L^1(\Omega, \cR)$, $\alpha\in{\mathbb R}_{+}$ and $q>1$, 
${\mathbb M}(\Omega,{\mathbb R}^k,q,\alpha,m)$ is the set of 
multi-valued functions $h:\Omega \times {\mathbb R}^k\rightarrow 2^{\cR^k}$
with the following properties
\begin{itemize}
    \item $v \mapsto h(x, v)$ is maximal monotone for almost all $x \in \Omega$,
    \item the mapping $x \mapsto j_{\lambda}(x, v) : \Omega \rightarrow \cR^k$ is
measurable for all $\lambda > 0$, where $j_{\lambda}(x, v)$ is
     the inverse of $v \mapsto v + \lambda h(x, v)$,
    \item for a.e. $x\in \Omega$ and every $v^*\in h(x,v)$
     \begin{eqnarray}
\label{inequMain} \alpha\left(\frac{|v|^q}{q}+\frac{|v^*|^{q^*}}{q^*}\right)
\le (v,v^*)+m(x),
\end{eqnarray}
where $1/q+1/{q^*}=1$.
\end{itemize}
\end{Def}
\begin{Rem}{\rm
We note that the condition \eq{inequMain} is equivalent to the following
two inequalities
\begin{eqnarray}
\label{inequMain1} &&|v^*|^{q^*}\le m_1(x)+\alpha_1|v|^q,\\
&& (v,v^*)\ge m_2(x)+\alpha_2|v|^q,\label{inequMain2}
\end{eqnarray}
for a.e. $x\in \Omega$, every $v^*\in h(x,v)$, with suitable functions
$m_1,m_2\in L^1(\Omega, \cR)$ and numbers 
$\alpha_1,\alpha_2\in{\mathbb R}_{+}$.} 
\end{Rem}
The main properties of the class ${\mathbb M}(\Omega,{\mathbb R}^k,q,\alpha,m)$ are
collected in the following proposition.
\begin{Prop}\label{MainClassMaxMonoProp}
Let ${\cal H}$ be a canonical extension\footnote{
 The canonical extension ${\cal H}$ of a function
 $h:\Omega\to{\mathbb M}(\Omega,{\mathbb R}^k,q,\alpha,m)$ is a monotone graph
 from $L^q(\Omega,{\mathbb R}^k)$ to $L^{q^*}(\Omega,{\mathbb R}^k)$, with
$1/q + 1/{q^*} = 1$, defined by:
\[Gr {\cal H} = \{[v, v^*] \in L^q(\Omega,{\mathbb R}^k)\times L^{q^*}(\Omega,{\mathbb R}^k)\mid
 [v(x), v^*(x)] \in Gr\ h(x)\ for\ a.e.\ x\in \Omega\}.\]
}
 of a function $h:\Omega\times{\mathbb R}^k\to 2^{{\mathbb R}^k}$,
which belongs to
    ${\mathbb M}(\Omega,{\mathbb R}^k,q,\alpha,m)$. Then ${\cal H}$
  is maximal monotone, surjective and $D({\cal H})=L^q(\Omega,{\mathbb R}^k)$.
     \end{Prop}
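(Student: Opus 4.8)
The proposition packages three facts about the canonical extension $\mathcal{H}$: (i) maximal monotonicity, (ii) $D(\mathcal{H}) = L^q(\Omega,\mathbb{R}^k)$, and (iii) surjectivity. I would prove them essentially in that order, since (ii) and (iii) will follow most cleanly once (i) is in hand. The underlying idea throughout is to transfer the pointwise information on $h(x,\cdot)$ — maximal monotonicity, the coercivity/growth bound \eqref{inequMain} (equivalently \eqref{inequMain1}--\eqref{inequMain2}), and the measurability of the resolvents $j_\lambda$ — to the Nemytskii-type operator $\mathcal{H}$ on the Lebesgue space, using the duality pairing between $L^q$ and $L^{q^*}$.

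\emph{Monotonicity and well-definedness.} Monotonicity of $\mathcal{H}$ is immediate by integrating the pointwise inequality $0 \le (v_1(x)-v_2(x))\cdot(v_1^*(x)-v_2^*(x))$ over $\Omega$. To see that the graph is nonempty over a given $v \in L^q(\Omega,\mathbb{R}^k)$, and more generally that $D(\mathcal{H}) = L^q$, I would use the resolvents: fix $\lambda>0$ and set $v^* := \lambda^{-1}(v - j_\lambda(\cdot, v))$. By hypothesis $x \mapsto j_\lambda(x, v(x))$ is measurable (this requires a small Carathéodory/measurable-selection argument composing the measurable-in-$x$, Lipschitz-in-$v$ map $j_\lambda$ with the measurable $v$), and the pointwise bound $|j_\lambda(x,v) - j_\lambda(x,0)| \le |v|$ together with $0 \in h(x,0)$ — so $j_\lambda(x,0)=0$ — gives $|j_\lambda(x,v(x))| \le |v(x)|$, hence $j_\lambda(\cdot,v) \in L^q$ and $v^* \in L^q$. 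Then \eqref{inequMain1} applied pointwise to the pair $(j_\lambda(x,v(x)), v^*(x)) \in Gr\, h(x)$ shows $v^* \in L^{q^*}(\Omega,\mathbb{R}^k)$, so $[j_\lambda(\cdot,v), v^*] \in Gr\,\mathcal{H}$. Letting $\lambda \to 0$ and using the standard fact that $j_\lambda(x,v) \to \proj_{\overline{D(h(x))}}(v)$ — combined with $D(\mathcal{H}) \supseteq$ range of these resolvent selections — one concludes $D(\mathcal{H}) = L^q(\Omega,\mathbb{R}^k)$; the key point making the domain full is that the pointwise coercivity \eqref{inequMain2} forces $D(h(x)) = \mathbb{R}^k$ for a.e.\ $x$ (a monotone operator on $\mathbb{R}^k$ that is coercive is surjective, hence onto, and its inverse is everywhere defined — dually $h(x)$ itself is everywhere defined once \eqref{inequMain1} is also used).

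\emph{Maximal monotonicity.} This is the main obstacle and the technical heart of the proof. The clean route is to verify the range condition $R(\mathcal{J} + \mathcal{H}) = L^{q^*}(\Omega,\mathbb{R}^k)$, where $\mathcal{J}: L^q \to L^{q^*}$ is the duality map $v \mapsto |v|^{q-2}v$ (which is itself monotone, coercive, demicontinuous, bounded); by the Browder–Minty theory for operators of monotone type in reflexive Banach spaces, $\mathcal{H}$ is maximal monotone iff such a range condition holds for $\mathcal{J} + \mathcal{H}$. Concretely, given $f \in L^{q^*}$, one solves pointwise $f(x) \in \mathcal{J}_0(v(x)) + h(x,v(x))$ where $\mathcal{J}_0(\xi) = |\xi|^{q-2}\xi$ on $\mathbb{R}^k$; for a.e.\ $x$ this finite-dimensional inclusion has a unique solution $v(x)$ because $\mathcal{J}_0 + h(x,\cdot)$ is maximal monotone and coercive on $\mathbb{R}^k$, and the solution depends measurably on $x$ by the measurable-selection theorem (this is where the measurability hypothesis on $x \mapsto h(x,\cdot)$ — via measurability of the resolvents $j_\lambda$ — is genuinely used). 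One then checks $v \in L^q$ and $f - \mathcal{J}v \in L^{q^*}$ using \eqref{inequMain1}--\eqref{inequMain2}: testing the pointwise inclusion with $v(x)$ gives $|v(x)|^q + (v^*(x),v(x)) = (f(x),v(x))$, and combining with \eqref{inequMain2} and Young's inequality yields an $L^q$-bound on $v$, after which \eqref{inequMain1} bounds $v^*$ in $L^{q^*}$. Alternatively — and perhaps more in the spirit of the "monotone graph on $L^q \times L^{q^*}$" language of the footnote — one can cite the result of Brézis on canonical extensions of measurable families of maximal monotone operators (the pairing $L^q$–$L^{q^*}$ version of the classical $L^2$ result), for which the above argument is precisely the proof; I would present the direct argument.

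\emph{Surjectivity.} Once $\mathcal{H}$ is maximal monotone with $D(\mathcal{H}) = L^q$, surjectivity follows from coercivity: by \eqref{inequMain2}, for any $[v,v^*] \in Gr\,\mathcal{H}$ one has $(v^*,v)_\Omega \ge \alpha_2 \|v\|_q^q + \int_\Omega m_2\,dx$, so $(v^*,v)_\Omega / \|v\|_q \to +\infty$ as $\|v\|_q \to \infty$. A maximal monotone coercive operator on a reflexive Banach space is surjective (again Browder–Minty / the standard surjectivity theorem for maximal monotone coercive operators), which gives $R(\mathcal{H}) = L^{q^*}(\Omega,\mathbb{R}^k)$ and completes the proof. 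I expect no difficulty here beyond correctly quoting the coercivity-implies-surjectivity theorem; the delicate part remains the measurable-selection step underpinning maximal monotonicity.
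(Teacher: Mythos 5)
Your proof is correct, but it takes a more self-contained route than the paper, which disposes of the proposition with a one-line citation to Corollary~2.15 of the reference (Damlamian et al.). What you wrote out is essentially the argument that the cited corollary packages: monotonicity of $\mathcal H$ by integrating the pointwise monotonicity; $D(\mathcal H)=L^q$ from the resolvent construction together with the growth bound \eqref{inequMain1} and the measurability hypothesis on $j_\lambda$; maximal monotonicity by solving the pointwise range inclusion $f(x)\in \mathcal J_0(v(x))+h(x,v(x))$ — the measurable-selection step resting on the measurability of the resolvents of $h(x,\cdot)$ — and checking $L^q$/$L^{q^*}$ integrability with \eqref{inequMain1}--\eqref{inequMain2}; surjectivity from coercivity via \eqref{inequMain2} and the standard surjectivity theorem for coercive maximal monotone operators. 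This is a reasonable trade: you give readers a transparent argument at the price of some length, while the paper simply defers. The one place worth tightening is the slightly elliptical remark ``dually $h(x)$ itself is everywhere defined once \eqref{inequMain1} is also used'': the clean reasoning is that condition \eqref{inequMain} makes both $h(x,\cdot)$ and its inverse graph-coercive on $\mathbb{R}^k$, so both are surjective as maximal monotone maps on a Hilbert space, and therefore both $D(h(x,\cdot))$ and $R(h(x,\cdot))$ equal $\mathbb{R}^k$ for a.e.\ $x$. With that clarification the argument stands; it substitutes an explicit Browder--Minty range check for the external citation the authors chose.
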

\begin{proof}
 See Corollary 2.15  in \cite{Damlamian07}. 
\end{proof}
Next, we define
the following notion of solutions for 
the initial boundary value problem (\ref{CurlPr1}) - (\ref{CurlPr6}). Both notions of the solutions
for (\ref{CurlPr1}) - (\ref{CurlPr6}) are introduced without assuming the homogeneity
of the the initial condition (\ref {CurlPr4}).
\begin{Def}\label{StrongSol}{\bf (Strong solutions)}
 A function $(u,\sigma,z)$ with $z=(p, \gamma)$ such that
\[(u,\sigma)\in H^{1}(0,T_e; H^{1}_0(\Omega, {\mathbb R}^3) \times
 L^{2} ({\Omega}, {\cal S}^3)),\] \[ p\in   
H^{1}(0,T_e;L^{2}_{\Curl}(\Omega,{\cal M}^3))\cap 
L^{2}(0,{T_e};Z^{2}_{\Curl}(\Omega,{\cal M}^3))\]
and \[ \gamma\in   
H^{1}(0,T_e;L^{2}(\Omega,{\mathbb R})),\ \ 
\Sigma^{\rm lin}\in L^{q}(\Omega_{T_e}, {\cal M}^3)\]
is called a {\rm strong solution} of the initial boundary value
problem (\ref{CurlPr1}) - (\ref{CurlPr6}), if for every $t\in [0,T_e]$
the function $(u(t),\sigma(t))$ is a weak solution of the boundary value problem
 (\ref{elast1}) - (\ref{elast3}) with ${\hat\varepsilon}_p=\sym p(t)$ and
$\hat{b}=b(t)$, the evolution inclusion (\ref{CurlPr3}), the initial condition (\ref{CurlPr4}) and the boundary condition (\ref{CurlPr5}) are satisfied pointwise.
\end{Def}
 Next, we state the main result of this work.
\begin{Theor}\label{existMain} 
Suppose that $1<q^*\le 2\le q<\infty$. Assume that  
$\Omega \subset \cR^3$ is a sliceable domain with a $C^1$-boundary and the tensor
 ${\mathbb C}$ has $L^\infty$-coefficients.
Let the function
$b \in W^{1,q}(0,T_e; L^{q}(\Omega, {\mathbb R}^3))$ be given.
 Assume that the function $g\in{\mathbb M}(\Omega, {\mathbb R}^N, q, \alpha, m)$ is of a subdifferential type,
  enjoys the self-controlling property (\ref{SelfControlling})
  and  that for a.e. $x\in\Omega$
 the relation
 \begin{eqnarray}
\label{Assumption1} 0\in g(x, B^T\sigma^{(0)}(x))
\end{eqnarray}
holds, where the function 
$\sigma^{(0)}\in L^2({\Omega}, {\cal S}^3)$ is  determined by equations
(\ref{elast1}) - (\ref{elast3}) for ${\hat\varepsilon}_p=0$ and $\hat{b}=b(0)$.
Then there exists a strong solution $(u,\sigma,z)$
of the initial boundary value problem (\ref{CurlPr1}) - (\ref{CurlPr6}).
\end{Theor}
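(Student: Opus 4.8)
The plan is to prove Theorem~\ref{existMain} by Rothe's time-discretization method, following the two-step strategy announced at the start of Section~\ref{Existence}. First I would fix a time step $h = T_e/M$ and, for $n = 1,\dots,M$, set up the incremental problem: given $z^{(n-1)}$, find $z^{(n)} = (p^{(n)},\gamma^{(n)})$ together with $(u^{(n)},\sigma^{(n)})$ solving the discretized equations, where $\partial_t z$ is replaced by the difference quotient $(z^{(n)}-z^{(n-1)})/h$. Using the Helmholtz-type projection $P_2$ (Definition~\ref{defProj}) one eliminates $u^{(n)},\sigma^{(n)}$ and rewrites $B^T\sigma^{(n)} = B^T\sym{\mathbb C}(\sym\nabla u^{(n)} - \sym Bz^{(n)})$ in terms of $z^{(n)}$ and the data $b(t_n)$; the operator $B^T\sym{\mathbb C}Q_2\sym B + L$ appearing here is non-negative and self-adjoint by Corollary~\ref{nonnegative}. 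The incremental inclusion then has the abstract form $z^{(n)} - z^{(n-1)} \in h\,g(\,\cdot\,,\,\Sigma^{\rm lin,(n)})$ with $\Sigma^{\rm lin,(n)}$ depending affinely on $z^{(n)}$ through the above non-negative operator plus the maximal monotone $\Curl\Curl$ term, which by Lemma~\ref{CurlCurl} is $\partial\Psi$ with domain $Z^2_{\Curl}(\Omega,{\cal M}^3)$. Since $g$ is of subdifferential type and $\mathcal{F}:=\mathcal{H}$ is maximal monotone and surjective with full domain by Proposition~\ref{MainClassMaxMonoProp}, the sum of all these monotone operators is maximal monotone (after regularizing, if needed, by a small coercive term $\varepsilon\,\mathrm{Id}$ to guarantee coercivity), so the incremental problem is solvable by the standard surjectivity theorem for maximal monotone operators; the regularization parameter is then removed using a priori bounds.

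Second I would derive $h$-uniform a priori estimates for the piecewise-constant and piecewise-affine interpolants $\bar z^h, z^h$. Testing the incremental inclusion with $z^{(n)}-z^{(n-1)}$ and summing over $n$, one exploits the coercivity inequality \eqref{inequMain2} (equivalently \eqref{inequMain}) to control $\sum_n h\,|\Sigma^{\rm lin,(n)}|$-type quantities and the time-difference quotients $\|(z^{(n)}-z^{(n-1)})/h\|$, while the defect energy $\tfrac{C_1}{2}\|\Curl p^{(n)}\|^2$ and the hardening energy $\tfrac12 L z^{(n)}\cdot z^{(n)}$ telescope. This yields bounds on $p^h$ in $H^1(0,T_e;L^2_{\Curl})$ and $\gamma^h$ in $H^1(0,T_e;L^2)$ and on $\sigma^h$ in $H^1(0,T_e;L^2)$. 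The crucial point where the hypotheses of the theorem are used is that a bound on $\sym p$ (via the elastic law and coercivity) together with a bound on $\Curl p$ does not by itself control $\|p\|_{L^2}$ — this is exactly closed by the incompatible Korn inequality \eqref{incompatible_korn}, valid because $\Omega$ is sliceable and the micro-hard boundary condition \eqref{CurlPr5} holds — so one obtains a uniform bound on $p^h$ in $L^2(0,T_e;L^2(\Omega,{\cal M}^3))$, hence in $L^2(0,T_e;L^2_{\Curl})$. Differentiating the incremental inclusion (or using the subdifferential structure) and re-testing with the second difference one upgrades this to a bound on $\Curl\Curl p^h$ in $L^2(0,T_e;L^2)$, giving the required $L^2(0,T_e;Z^2_{\Curl})$ bound; the self-controlling property \eqref{SelfControlling} converts control of $\|L g(y)\|_2$ into control of $\|Bg(y)\|_2$, which together with \eqref{inequMain1} furnishes the bound on $\Sigma^{\rm lin}$ in $L^q(\Omega_{T_e})$. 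The compatibility assumption \eqref{Assumption1} ensures the initial datum is admissible so that the very first increment is well-posed and the telescoped estimates start from a finite value.

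Finally, I would pass to the limit $h\to 0$. The uniform bounds give weak (and weak-$*$) limits $u,\sigma,p,\gamma$ along a subsequence, with $p^h \rightharpoonup p$ in $H^1(0,T_e;L^2_{\Curl})\cap L^2(0,T_e;Z^2_{\Curl})$, etc.; by Aubin–Lions one gets strong $L^2$-convergence of $\bar z^h$ in space-time and convergence of the interpolants to a common limit. The linear elasticity system \eqref{CurlPr1}--\eqref{CurlPr2} and the boundary conditions \eqref{CurlPr5}--\eqref{CurlPr6} pass to the limit by linearity and weak closedness of $P_2$, $\Gamma_n$; the initial condition \eqref{CurlPr4} passes because of the $H^1$-in-time bound. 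The evolution inclusion \eqref{CurlPr3} is handled by writing it in the equivalent subdifferential/variational form, using weak lower semicontinuity of the convex potentials (including $\Psi$ from Lemma~\ref{CurlCurl}) and the maximal monotonicity of the canonical extension $\mathcal{H}$ of $g$ (Proposition~\ref{MainClassMaxMonoProp}) together with the standard Minty-type argument: $\limsup (\,\cdot\,, \partial_t \bar z^h) \le (\,\cdot\,,\partial_t z)$. I expect the main obstacle to be precisely this limit passage in the nonlinear inclusion in the presence of the unbounded $\Curl\Curl$ operator and the non-associative (merely monotone, not subdifferential in the variable $\Sigma^{\rm lin}$ alone) flow — one must carefully combine the self-adjointness/monotonicity of $\Curl\Curl$ (Lemma~\ref{SelfadjointCurlCurl}, Corollary~\ref{MaxMonCurlCurl}), the identity \eqref{monotonyCurlCurl}, and the incompatible Korn inequality to identify the weak limit of $\Sigma^{\rm lin,h}$ with the correct expression in the limit $p$, and to show the limit pair still lies in the graph of the maximal monotone flow; a secondary technical difficulty is establishing enough time-regularity of $p$ to make sense of $\Curl\Curl\,\partial_t p$ and justify the energy identities used in the a priori estimates.
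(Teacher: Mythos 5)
Your proposal gets the overall skeleton right (Rothe discretization, Helmholtz--type reduction via $P_2$, maximal monotone sum theorem plus surjectivity, Minty limit passage), but it diverges from the paper in a place that actually matters, and the divergence contains a genuine gap.

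You announce that the ``crucial point'' is closing the $L^2$-bound on $p$ via the incompatible Korn inequality \eqref{incompatible_korn}, starting from ``a bound on $\sym p$ (via the elastic law and coercivity) together with a bound on $\Curl p$.'' The energy estimate does give $\Curl Bz_m$ in $L^\infty(L^2)$, but the elastic law does \emph{not} bound $\sym p_m$ in $L^2$: since $\sigma = -\mathbb{C}\,Q_2\sym p$ by construction of the Helmholtz projector, a bound on $\sigma$ controls only the \emph{projection} $Q_2\sym p$, not the complementary part $P_2\sym p = \sym\nabla u$. You therefore have no uniform $L^2$-control of $\sym p_m$ to feed into \eqref{incompatible_korn}, and the argument is circular. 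What the flow-rule coercivity \eqref{inequMain} actually gives directly is $\partial_t z_m$ bounded in $L^{q^*}$, hence $z_m$ bounded only in $L^{q^*}(\Omega_{T_e})$ with $q^*\le 2$ -- not enough for the required $H^1(0,T_e;L^2_{\Curl})$ regularity of $Bz$ when $q^*<2$.

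The paper does not use the incompatible Korn inequality in the existence proof at all. Instead it closes the $L^2$-estimate through a different mechanism: (i) the assumption that $g$ is of \emph{subdifferential type} together with \eqref{Assumption1} is used to test the discretized inclusion with the increment of $\Sigma^{\rm lin}$, which (after re-identifying the left side as a quadratic form in $(z^n_m-z^{n-1}_m)/h$ using the linear positive semi-definite operators $M$, $L$, $\Curl\Curl$) yields uniform bounds on $L^{1/2}\partial_t z_m$, $\Curl B\,\partial_t z_m$ and $M^{1/2}\partial_t z_m$ in $L^2(\Omega_{T_e})$; (ii) only then does the \emph{self-controlling property} \eqref{SelfControlling} convert the control of $\|L\,\partial_t z_m\|_2$ into a control of $\|B\,\partial_t z_m\|_2$, which, combined with the $\Curl$-bound, places $Bz_m$ uniformly in $H^1(0,T_e;L^2_{\Curl})$. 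You instead assign the self-controlling property the job of producing the $L^q$-bound on $\Sigma^{\rm lin}$, but that bound already follows trivially from the coercivity \eqref{inequMain}; the self-controlling property is doing genuinely different work. You should also note that the paper uses a $\frac1m\,I$-regularization of $M_m$ (removed along the same discretization parameter), not an independent $\varepsilon$-regularizer, and that the strong convergence $z_m - \bar z_m\to 0$ and weak convergence $\sigma_m-\bar\sigma_m\rightharpoonup 0$ are obtained by direct computation on the Rothe interpolants (following Roub\'{\i}\v{c}ek), not by Aubin--Lions. These last two differences are cosmetic, but the Korn/self-controlling mix-up is not: as written, your a priori estimate would not close.
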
 
In order to deal with the measurable elasticity tensor ${\mathbb C}$, we reformulate
the problem (\ref{CurlPr1}) - (\ref{CurlPr6}) as follows:\\
Let the function
 $(\hat v,\hat\sigma)\in W^{1,q}(0, T_e, W^{1,q}_0( \Omega, {\mathbb R}^3)
 \times L^q ( \Omega, {\cal S}^3))$ be a solution of the linear elasticity
 problem
  formed by the equations
\begin{eqnarray}
\label{elast1A} -\di_x \hat\sigma(x, t) &=& {b}(x, t), \hskip2.8cm \ x \in \Omega,\\
\label{elast2A} \hat\sigma(x, t) &=&  \hat{\mathbb C}( \sym
 ( \nabla _x \hat v(x, t)),  \quad \quad \quad x \in \Omega,\\
 \label{elast3A} \hat v(x, t) &=&0, \hskip3.5cm  \ x \in {
\partial \Omega},
\end{eqnarray}
where $\hat{\mathbb C}:{\cal S}^3\to{\cal S}^3$ is any positive definite symmetric
 linear mapping independent of $(x, t)$. Such a function  $(\hat v,\hat\sigma)$ exists
  (see \cite[Theorem 10.15]{Giusti2003}). Then the solution $(u, \sigma, z)$ of
  the initial boundary value problem (\ref{CurlPr1}) - (\ref{CurlPr6}) has the following
  form
  \[(u, \sigma, z)=(\tilde v+\hat v, \tilde \sigma+\hat\sigma, z),\]
  where the function $(\tilde v, \tilde\sigma, z)$ solves the problem
  \begin{eqnarray}
- \di_x \tilde\sigma(x,t) &=&  0, \label{CurlPr1A}
\\[1ex]
\tilde\sigma(x,t) &=& {\mathbb C}[x] ( \sym (\na_x \tilde v(x,t) - Bz(x,t) ) )  \label{CurlPr2A}\\[1ex]
&&\hspace{16ex}+({\mathbb C}[x] -\hat{\mathbb C})( \sym( \nabla _x \hat v(x, t)), \non\\[1ex] 
\label{CurlPr3A} \partial_t z(x,t) & \in & g \big(x,\Sigma^{\rm lin}(x,t)\big),  
\hspace{3ex} \Sigma^{\rm lin}=\Sigma^{\rm lin}_e+\Sigma^{\rm lin}_{\rm sh}
+\Sigma^{\rm lin}_{\rm curl},\label{microPr3A} \\[1ex] 
\Sigma^{\rm lin}_{\rm e}&=&B^T(\tilde\sigma+\hat\sigma),
\hspace{1ex} \Sigma^{\rm lin}_{\rm sh}=-Lz,
\hspace{1ex} \quad \Sigma^{\rm lin}_{\rm curl}=- C_1B^T \Curl\Curl Bz\, ,\non\\[1ex]
z(x,0) &=& 0, \quad\quad\quad x \in \Omega,  \label{CurlPr4A}   \\
Bz(x,t)\times n (x)&=&0,\hspace{7ex} (x,t) \in \partial\Omega \times
  [0,T_e), \label{CurlPr5A}\\
\tilde v(x,t) &=& 0, \quad\quad (x,t) \in \partial \Omega \times
  [0,T_e)\,.\label{CurlPr6A}
\end{eqnarray}
Here, the function $(\hat v,\hat\sigma)$ given as the solution of (\ref{elast1A}) - (\ref{elast3A})
is considered as known. Next, we show that the problem (\ref{CurlPr1A}) - (\ref{CurlPr6A}) 
has a solution. This will prove the existence of solutions for (\ref{CurlPr1}) - (\ref{CurlPr6}).
\begin{proof} We show the existence
of  solutions using the Rothe method (a time-discretization
method, see \cite{Roubi05} for details).  
In order to introduce a time-discretized problem, let us fix any
$m\in{\mathbb N}$ and set
\[h:=\frac{T_e}{2^m}, \ z^0_m:=0,\ \hat\sigma^n_m:=\frac{1}{h}\int^{nh}_{(n-1)h}\hat\sigma(s)ds\in 
L^{q}( \Omega, {\cal S}^3),\ \ n=1,...,2^m. \]
Then we are looking for functions $u^n_m\in H^1(\Omega,{\mathbb R}^3)$,
$\sigma^n_m\in L^2(\Omega,{\cal S}^3)$ and $z^n_m\in
 Z^2_{\Curl}(\Omega,{\mathbb R}^N)$ with $Bz^n_m(x)\in\mathfrak{sl}(3)$ for a.e. $x\in\Omega$ and
\[\Sigma^{\rm lin}_{n,m}:=B^T(\sigma^n_m+\hat\sigma^n_m)-L z^n_m -
\frac{1}{m}z^n_m-C_1B^T \Curl\Curl Bz^n_m\in L^q(\Omega,{\mathbb R}^N)\]
solving the following problem
\begin{eqnarray}
- \di_x \sigma^n_m(x) &=&  0, \label{CurlPr1Dis}
\\[1ex]
\sigma^n_m(x) &=& {\mathbb C} [x]( \sym (\na_x u^n_m(x) - Bz^n_m(x) ) )   \label{CurlPr2Dis}\\[1ex]
&&+({\mathbb C}[x] -\hat{\mathbb C})(\hat{\mathbb C})^{-1}\hat\sigma^n_m(x), \non
\\[1ex] 
\label{CurlPr3Dis} \frac{z^n_m(x)-z^{n-1}_m(x)}{h} & \in & 
g \big(x, \Sigma^{\rm lin}_{n,m}(x)\big),\label{microPr3Dis} 
\end{eqnarray}
together with the boundary conditions 
\begin{eqnarray} 
Bz^n_m(x)\times n (x)&=&0,\hspace{7ex} x \in \partial\Omega, 
\label{CurlPr5Dis}\\
u^n_m(x) &=& 0, \quad x \in \partial \Omega\,.\label{CurlPr6Dis}
\end{eqnarray}
Next, we adopt the reduction technique proposed in \cite{AlCh02} to the above equations.
Suppose that $(u^n_m, \sigma^n_m, z^n_m)$ is a solution of the boundary value
problem (\ref{CurlPr1Dis}) - (\ref{CurlPr6Dis}). 
The equations 
(\ref{CurlPr1Dis}) - (\ref{CurlPr2Dis}),
(\ref{CurlPr6Dis})  form a boundary value problem for the solution $(u^n_m, \sigma^n_m)$ of the 
problem of linear elasticity. Due to linearity of this problem we can write these components
of the solution in the form
\[(u^n_m, \sigma^n_m) = ({U}^n_m, {\Sigma}^n_m) + 
(w^n_m,\tau^n_m),\]
with the solution $(w^n_m,\tau^n_m)$ of the Dirichlet boundary
value problem (\ref{elast1}) - (\ref{elast3}) to the data $\hat{b} =0$, 
$\hat{\varepsilon}_p = {\mathbb C}^{-1}({\mathbb C} -\hat{\mathbb C})(\hat{\mathbb C})^{-1}\hat\sigma^n_m$, and with the solution $({U}^n_m, {\Sigma}^n_m)$ of the
problem (\ref{elast1}) - (\ref{elast3}) to the data $\hat{b} =0$, 
$\hat{\varepsilon}_p = \sym(Bz^n_m)$. 
We thus obtain
\[  \sym ( \nabla _x u^n_m) - {\sym}(Bz^n_m) = (P_2
  - I) {\sym}(Bz^n_m) + \sym ( \nabla _x w^n_m). \]
where the operator $P_2$ is defined in Definition~\ref{defProj}.
We insert this equation into (\ref{CurlPr2Dis}) and get that
(\ref{CurlPr3Dis}) can be rewritten in the following form
\begin{eqnarray}
\label{RedCurl1}&& \frac{z^n_m-z^{n-1}_m}{h}  \in  
 {\cal G} \big(-M_mz^n_m- C_1B^T \Curl\Curl Bz^n_m
+B^T(\hat\sigma^n_m+\tau^n_m)\big),\\
\label{RedPr2}&& Bz^n_m(x)\times n (x)=0,
 \hspace{7ex} x \in \partial\Omega,
\end{eqnarray}
where 
$$M_m:=(B^T\sym{\mathbb C}Q_2\sym B+L) +\frac{1}{m}I:
L^2(\Omega,{\cal M}^3)\to L^2(\Omega,{\cal M}^3)$$ 
with  the Helmholtz projection $Q_2$.  Here ${\cal G}$ denotes the canonical
extension of $g$. Next,
 the problem (\ref{RedCurl1}) - (\ref{RedPr2}) reads 
\begin{eqnarray}\label{ImportantEquiv}
\Psi(z^n_m)\ni B^T(\hat\sigma^n_m+\tau^n_m),
\end{eqnarray}
where
\[\Psi(v)={\cal G}^{-1}\Big(\frac{v-z^{n-1}_m}{h}\Big)+M_m(v)+\partial\Phi(v).\]
Here, the functional $\Phi:L^2(\Omega,{\cal M}^3)\to\bar\cR$ is given by
\[\Phi(v):=
\begin{cases}\frac{1}{2}\int_\Omega|\Curl v(x)|^2dx, & v\in 
               L^2_{\Curl,0}(\Omega,{\cal M}^3)\\
                       +\infty, & {\rm otherwise}
  \end{cases},\]
respectively. The facts that $\Phi$ is a proper convex lower semi-continuous
functional and that $\Curl\Curl=\partial\Phi$ are proved 
in Section~\ref{Operproperties}. Since $M_m$ is bounded, self-adjoint and positive
definite  (see Corollary~\ref{nonnegative} and the definition of 
$M_m$), it is maximal monotone by 
Theorem II.1.3 in \cite{Barb76}. The last thing
 which we
have to verify is whether the following operator
\[\Psi={\cal G}^{-1}+M_m+\partial\Phi\]
is maximal monotone. Since $g\in{\mathbb M}(\Omega, {\cal M}^3, q, \alpha, m)$, using the boundness of $M_m$
we conclude that the domains of ${\cal G}^{-1}$ and $M_m$
are equal to the whole space $L^2(\Omega,{\cal M}^3)$.
Therefore, Theorem III.3.6 in \cite{Pas78} (or \cite[Theorem II.1.7]{Barb76})
 guarantees that the sum ${\cal G}^{-1}+M_m+\partial\Phi$
is maximal monotone with 
\[\dom(\Psi)=\dom(\partial\Phi)
:=Z^2_{\Curl}(\Omega,{\cal M}^3).\]
Since $M_m$ is coercive in $L^2(\Omega,{\cal M}^3)$, 
which obviously yields the coercivity of $\Psi$, 
 the operator
$\Psi$ is surjective by Theorem III.2.10 in \cite{Pas78}.
Thus, we conclude that equation (\ref{ImportantEquiv})
as well as
the problem (\ref{RedCurl1}) - (\ref{RedPr2}) has a solution with the 
required regularity, i.e. $Bz^n_m\in
 Z^2_{\Curl}(\Omega,{\cal M}^3)$. The solution is unique due to the following estimate
\begin{eqnarray}\label{est_n}
0=(\Psi (z_1)-\Psi (z_2), z_1-z_2)_\Om\geq (M_m( z_1-z_2),z_1-z_2)_\Om\geq \frac 1m\|z_1-z_2\|^2_\Om,
\end{eqnarray} 
which holds since the operators ${\cal G}^{-1}$ and $\partial\Phi$ are monotone.
By the constructions this implies that the boundary value problem
(\ref{CurlPr1Dis}) - (\ref{CurlPr6Dis}) is solvable as well (details
can be found in \cite{AlCh02}). Moreover, 
$Bz^n_m(x)\in\mathfrak{sl}(3)$ for a.e. $x\in\Omega$.
\vspace{1ex}\\
{\bf Rothe approximation functions:} 
For any family $\{\xi^{n}_m\}_{n=0,...,2m}$ of functions in a reflexive Banach
space $X$, we define
{\it the piecewise affine interpolant} $\xi_m\in C([0,T_e],X)$ by
\begin{eqnarray}\label{RotheAffineinterpolant}
\xi_m(t):= \left(\frac{t}h-(n-1)\right)\xi^{n}_m+
\left(n-\frac{t}h\right)\xi^{n-1}_m \ \ {\rm for} \ (n-1)h\le t\le nh
\end{eqnarray}
and {\it the piecewise constant interpolant} $\bar\xi_m\in L^\infty(0,T_e;X)$ by
\begin{eqnarray}\label{RotheConstantinterpolant}
\bar\xi_m(t):=\xi^{n}_m\  {\rm for} \ (n-1)h< t\le nh, \ 
 n=1,...,2^m, \ {\rm and} \ \bar\xi_m(0):=\xi^{0}_m.
\end{eqnarray}
For the further analysis we recall the following property of $\bar\xi_m$
and $\xi_m$: 
\begin{eqnarray}\label{RotheEstim}
\|\xi_m\|_{L^q(0,T_e;X)}\le\|\bar\xi_m\|_{L^q(-h,T_e;X)}\le
\left(h \|\xi^{0}_m\|^q_X+\|\bar\xi_m\|^q_{L^q(0,T_e;X)}\right)^{1/q},
\end{eqnarray}
where $\bar\xi_m$ is formally extended to $t\le0$ by $\xi^{0}_m$ and
$1\le q\le\infty$ (see \cite{Roubi05}).\vspace{1ex}\\
{\bf A-priori estimates.} 
Multiplying (\ref{CurlPr1Dis}) by $(u^n_m-u^{n-1}_m)/h$ and then integrating
over $\Omega$ we get
\[\big(\sigma^n_m, \sym (\na_x (u^n_m-u^{n-1}_m)/h)\big)_\Omega=
0.\]
Equations (\ref{CurlPr2Dis}), (\ref{CurlPr3Dis}) imply that for a.e. $x\in\Omega$
\[\sigma^n_m\cdot\Big(\sym (\na_x(u^n_m-u^{n-1}_m)/h)-{\mathbb C}^{-1}[x]
(\sigma^n_m-\sigma^{n-1}_m)/h\Big)\]
\[+\sigma^n_m\cdot\Big({\mathbb C}^{-1}[x]({\mathbb C}[x] -\hat{\mathbb C})(\hat{\mathbb C})^{-1}
(\hat\sigma^n_m-\hat\sigma^{n-1}_m)/h\Big)\]
\[-\frac{z^n_m-z^{n-1}_m}h\cdot\Big(Lz^n_m
+\frac{1}{m} z^n_m+ C_2B^T \Curl\Curl Bz^n_m\Big)\]
\[+\frac{z^n_m-z^{n-1}_m}h\cdot B^T\hat\sigma^n_m=g^{-1}\Big(\frac{z^n_m-z^{n-1}_m}h\Big)
\cdot\Big(\frac{z^n_m-z^{n-1}_m}h\Big).\]
After integrating the last identity
over $\Omega$,  the above computations imply 
\[\frac{1}{h}\Big({\mathbb C}^{-1}\sigma^n_m,
\sigma^n_m-\sigma^{n-1}_m\Big)_\Omega+ \frac{1}{h}
\Big(L^{1/2} (z^n_m-z^{n-1}_m),L^{1/2} z^n_m\Big)_\Omega\]
\[
+\frac{1}{m} \frac{1}{h}\Big(z^n_m-z^{n-1}_m, z^n_m\Big)_\Omega
+ C_1 \frac{1}{h} \Big(\Curl B(z^n_m-z^{n-1}_m),\Curl B z^n_m\Big)_\Omega
\]\[
+\frac{\alpha}{q}\left\| \Sigma^{\rm lin}_{n,m}\right\|^{q}_{q}
+\frac{\alpha}{q^*}\left\|\frac{z^n_m-z^{n-1}_m}h\right\|^{q^*}_{q^*}\le\int_\Omega m(x)dx\]
\[
+\frac{1}{h}\left(\sigma^n_m, \bar{\mathbb C}(\hat\sigma^n_m-\hat\sigma^{n-1}_m)\right)_\Omega+\frac{1}{h}\left(B^T\hat\sigma^n_m, z^n_m-z^{n-1}_m\right)_\Omega,\]
where 
$\bar{\mathbb C}:={\mathbb C}^{-1}({\mathbb C} -\hat{\mathbb C})(\hat{\mathbb C})^{-1}$.
Multiplying by $h$ and summing the obtained relation for $n=1,...,l$ 
for any fixed $l\in[1,2^m]$ we derive the following inequality 
(here ${\mathbb B}:={\mathbb C}^{-1}$)
\begin{eqnarray}
&& \frac{1}2\Big(
\|{\mathbb B}^{1/2}\sigma^l_m\|^2_2+\|L^{1/2} z^l_m\|^2_2
+\frac{1}{m}\|z^l_m\|^2_2+ C_1\|\Curl Bz^l_m\|^2_2\Big)
\non\\
&&\label{AprioriEstimHelp1}+\frac{h\alpha}{q}\sum^l_{n=1}\left\| \Sigma^{\rm lin}_{n,m}\right\|^{q}_{q}
+\frac{h\alpha}{q^*}\sum^l_{n=1}\left\|\frac{z^n_m-z^{n-1}_m}h\right\|^{q^*}_{q^*}\le
C^{(0)}+lh\int_\Omega m(x)dx\\&&+
h\sum^l_{n=1}\left(\sigma^n_m,\bar{\mathbb C}
\frac{\hat\sigma^n_m-\hat\sigma^{n-1}_m}h\right)_\Omega
+
h\sum^l_{n=1}\left(B^T\hat\sigma^n_m,\frac{z^n_m-z^{n-1}_m}h\right)_\Omega,\non
\end{eqnarray}
where\footnote{Here we use the following inequality
\[\sum^l_{n=1}(\phi^n_m-\phi^{n-1}_m,\phi^n_m)_\Omega=
\frac{1}2\sum^l_{n=1}\Big(\|\phi^n_m\|^2_2
-\|\phi^{n-1}_m\|^2_2\Big)\]
\[+\frac{1}2\sum^l_{n=1}\|\phi^n_m-\phi^{n-1}_m\|^2_2
\ge\frac{1}2\|\phi^l_m\|^2_2
-\frac{1}2\|\phi^{0}_m\|^2_2\]
for any family of functions $\phi^{0}_m,\phi^{1}_m,...,\phi^{m}_m$.}
\[C^{(0)}:=
\|{\mathbb B}^{1/2}\sigma^{(0)}\|^2_2.\]
Since $\hat\sigma^n_m\in L^q ( \Omega, {\cal S}^3)$,
using Young's
inequality with $\epsilon>0$ we get that
\begin{eqnarray}\label{AprioriEstimHelp3} 
&&\left(B^T\hat\sigma^n_m,\frac{z^n_m-z^{n-1}_m}h\right)_\Omega\le \|B^T\hat\sigma^n_m\|_{q}
\|({z^n_m-z^{n-1}_m})/h\|_{q^*}\non\\
&&\hspace{14ex}\le C_\epsilon
 \|B^T\| \|\hat\sigma^n_m\|^q_{q}+\epsilon\|(z^n_m-z^{n-1}_m)/h\|^{q^*}_{q^*},
\end{eqnarray}
where $C_\epsilon$ is a positive constant appearing in the Young's
inequality. Analogically, we obtain
\begin{eqnarray}\label{AprioriEstimHelp3A} 
&&\left(\sigma^n_m, \bar{\mathbb C}\frac{\hat\sigma^n_m-\hat\sigma^{n-1}_m}h\right)_\Omega
\le
\epsilon\|\sigma^n_m\|^2_{2}+C_\epsilon\|(\hat\sigma^n_m-\hat\sigma^{n-1}_m)/h\|^{2}_{2}
\end{eqnarray}
with some other constant $C_\epsilon$.
Combining the inequalities (\ref{AprioriEstimHelp1}), 
(\ref{AprioriEstimHelp3}) and (\ref{AprioriEstimHelp3A}),
and choosing an appropriate value for $\epsilon>0$ we obtain 
the following estimate
\begin{eqnarray}\label{aprioriEstim1}
&&\frac{1}2\Big(
\|{\mathbb B}^{1/2}\sigma^l_m\|^2_2+\|L^{1/2} z^l_m\|^2_2
+\frac{1}{m}\|z^l_m\|^2_2+ C_1\|\Curl Bz^l_m\|^2_2\Big)\non\\
&&+h\hat{C}_\epsilon\sum^l_{n=1}\left(\left\| \Sigma^{\rm lin}_{n,m}\right\|^{q}_{q}+
\Big\|\frac{z^n_m-z^{n-1}_m}h\Big\|^{q^*}_{q^*}\right)
\le
C^{(0)}+lh\int_\Omega m(x)dx
\\&&+h\epsilon\sum^l_{n=1}\|\sigma^n_m\|^2_{2}
+h\tilde{C}_\epsilon\sum^l_{n=1}\Big(\|\hat\sigma^n_m\|^q_{q}+
\|(\hat\sigma^n_m-\hat\sigma^{n-1}_m)/h\|^{2}_{2}\Big),\non
\end{eqnarray}
where $\tilde{C}_\epsilon$ and $\hat{C}_\epsilon$ are some positive
constants. Now, taking Remark 8.15 in \cite{Roubi05} and the definition
of Rothe's approximation functions into account we 
rewrite (\ref{aprioriEstim1}) as follows
\begin{eqnarray}
&&
\|{\mathbb B}^{1/2}\bar\sigma_m(t)\|^2_2+\|L^{1/2}\bar z_m(t)\|^2_2
+\frac{1}{m}\|\bar z_m(t)\|^2_2+ C_1\|\Curl B\bar z_m(t)\|^2_2\non\\
&&\hspace{3ex}+
\hat{C}_\epsilon\int_0^{T_e}\int_\Omega\left( \big|\partial_t z_m(x,t)\big|^{q^*} 
+|\bar\Sigma^{\rm lin}_{m}(x,t)|^q\right) dxdt\label{aprioriEstim2} \\
&&\hspace{3ex}\le
2C^{(0)}+2T_e\|m\|_{1,\Omega}+\epsilon\|\sigma_m\|^2_{2,\Omega\times(0, T_e)}
+2\tilde{C}_\epsilon\|\hat\sigma\|^q_{W^{1,q}(0,{T_e};L^q(\Omega,{\cal S}^3))}.\non
\end{eqnarray}
From (\ref{aprioriEstim2}) we get immediately that
\begin{eqnarray}
&&
\bar{C}_\epsilon\|\sigma_m\|^2_{2,\Omega\times(0, t)}+\|L^{1/2}\bar z_m(t)\|^2_2
+\frac{1}{m}\|\bar z_m(t)\|^2_2+ C_1\|\Curl B\bar z_m(t)\|^2_2\non\\
&&\hspace{3ex}+
\hat{C}_\epsilon\left(\|\partial_t z_m\|^{q^*}_{q^*,\Omega\times(0, T_e)}+
\|\bar\Sigma^{\rm lin}_{m}\|^{q}_{q,\Omega\times(0, T_e)}\right)\label{aprioriEstim2A}
\\
&&\hspace{3ex}\le 2C^{(0)}+2T_e\|m\|_{1,\Omega}
+2\tilde{C}_\epsilon\|\hat\sigma\|^q_{W^{1,q}(0,{T_e};L^q(\Omega,{\cal S}^3))},\non
\end{eqnarray}
where $\bar{C}_\epsilon$ is some other constant depending on $\epsilon$.
Altogether, from estimate (\ref{aprioriEstim2A}) we get that
\begin{eqnarray}
&&\{z_m\}_m \ {\rm is}\  {\rm uniformly}\  {\rm bounded}\  {\rm in}
 \ W^{1,{q^*}}(0,{T_e};L^{q^*}(\Omega, {\mathbb R}^N)),
\label{aprioriEstim3}\\[1ex]
&&\{L^{1/2}\bar z_m\}_m \ {\rm is}\  {\rm uniformly}\  {\rm bounded}\  {\rm in}
 \ L^\infty(0,{T_e};L^{2}(\Omega,{\mathbb R}^N)),\label{aprioriEstim3new1}\\[1ex]
&&\{\sigma_m\}_m,\ {\rm is}\  {\rm uniformly}\  {\rm bounded}\  {\rm in}
 \ L^2(0,{T_e};L^{2}(\Omega,{\cal S}^3)),\label{aprioriEstim3a}\\[1ex]
&&\{\Curl B\bar z_m\}_m\ {\rm is}\  {\rm uniformly}\  {\rm bounded}\  {\rm in}
 \ L^\infty(0,{T_e};L^{2}(\Omega,{\cal M}^3)),\label{aprioriEstim3new2}\\[1ex]
&&\left\{\bar\Sigma^{\rm lin}_{m}\right\}_m\ {\rm is}\  {\rm uniformly}\  {\rm bounded}\  {\rm in}
 \ L^q(0,{T_e};L^{q}(\Omega,{\mathbb R}^N)),\label{aprioriEstim4cc}\\[1ex]
&&\left\{\frac1{\sqrt{m}} \bar z_m\right\}_m\ {\rm is}\  {\rm uniformly}\  {\rm bounded}\  {\rm in}
 \ L^\infty(0,{T_e};L^{2}(\Omega, {\mathbb R}^N)).\label{aprioriEstim3b} \end{eqnarray}
In particular, the uniform boundedness of the sequences in (\ref{aprioriEstim3}) - (\ref{aprioriEstim3b}) yields 
\begin{eqnarray}
&&\{u_m\}_m \ {\rm is}\  {\rm uniformly}\  {\rm bounded}\  {\rm in}
 \  W^{1,{q^*}}(0,{T_e};W^{1,{q^*}}_0(\Omega,{\mathbb R}^3)),\\[1ex]
&&\{\Curl\Curl B\bar z_m\}_m\ {\rm is}\  {\rm uniformly}\  {\rm bounded}\  
{\rm in} \ L^2(0,{T_e};L^{2}(\Omega,{\cal M}^3)).\label{aprioriEstim4}
\end{eqnarray}
Employing (\ref{RotheEstim}) the estimates (\ref{aprioriEstim3}) - 
(\ref{aprioriEstim4}) further imply that
 the sequences  $\{\sigma_m\}_m$, $\{L^{1/2} z_m\}_m$,
$\{\Curl Bz_m\}_m$, $\left\{ z_m/\sqrt{m}\right\}_m$, 
$\left\{\Sigma^{\rm lin}_{m}\right\}_m$ and $\{\Curl\Curl Bz_m\}_m$ are also
uniformly bounded in the corresponding spaces. As a result, we have
\begin{eqnarray}
\{Bz_m\}_m \ {\rm is}\  {\rm uniformly}\  {\rm bounded}\  {\rm in}
 \ L^{q^*}(0,{T_e};Z^{q^*}_{\Curl}(\Omega,{\cal M}^3)).
\label{aprioriEstim6}
\end{eqnarray}

Using the assumption that $g$ is of subdifferential type, i.e. there exists a function $\tilde f$ such that
$$(g(x,v),v-w)\geq \tilde f(x,v)-\tilde f(x,w)$$ is satisfied for a.e. $x\in\Om$ and all $w\in \R^N$, we can improve the estimates (\ref{aprioriEstim3new1}), (\ref{aprioriEstim3new2}) and  (\ref{aprioriEstim3b}). By assumption (\ref{Assumption1}) we obtain for a.e. $x\in\Om$ and all $w\in L^2(\Om,\R^N)$
$$\tilde f(x,w(x))\geq \tilde f(x,B^T\sigma^{(0)}(x)).$$
This inequality implies that $\tilde f$ attains its minimum at the point $B^T\sigma^{(0)}$. We introduce the function $f$ defined by
\begin{align}
f(x,v)=\tilde f(x,v)-\tilde f(x,B^T\sigma^{(0)}(x)).
\end{align}
Then the functional $f$ satisfies $f\geq 0$ and $g=\partial\tilde f=\partial f$ and we can suppose that $g(x,v)=\partial f(x,v)$ with $f\geq 0$.

Next, we show that the sequence $\left\{f(\Sigbar_{m})\right\}_m$ is uniformly bounded 
in the space $L^\infty(0,{T};L^{1}(\Om,\R)$ and the sequences 
$\{L^{1/2} z_m\}_m$ and $\left\{\frac 1{\sqrt{m}} z_m\right\}_m$ are uniformly bounded 
in $W^{1,2}(0,{T};L^{2}(\Om,\R^N))$. Indeed, multiplying (\ref{microPr3Dis}) 
by the term $(\Sig_{n,m}-\Sig_{n-1,m})/{h}$ and integrating over $\Om$ we obtain
\begin{align}
\label{est_n1}
\left( \frac{z_m^n-z_m^{n-1}}{h}, \frac{\Sig_{n,m}-\Sig_{n-1,m}}{h}\right)_\Om&=\frac 1h(\partial f(\Sig_{n,m}),\Sig_{n,m}-\Sig_{n-1,m})_{\Om}\non\\&\geq \frac 1h (F(\Sig_{n,m})-F(\Sig_{n-1,m})),
\end{align}
where $F:L^2(\Om)\to\bar\R$ is a convex functional defined by 
$$F (v)=\begin{cases}\int_{\Om}f(x,v(x))dx,& f(\cdot,v(\cdot))\in L^1(\Om), \\+\infty,& {\rm otherwise}.\end{cases}$$
In the following we use the notation $M:= -B^T{\rm sym}\C Q_2{\rm sym}B$, $\partial\Phi:= C_1B^T{\rm Curl Curl}B$ and $\hat{z}_m^n:=B^T(\si_m^n+\tau_m^n)$ are used. 
Since $M$ and $\partial\Phi$ are linear positive semi-definite, symmetric  operators we can rewrite the left side of (\ref{est_n1}) as follows
\begin{align}
\label{est_n2}
&\left( \frac{z_m^n-z_m^{n-1}}{h}, \frac{\Sig_{n,m}-\Sig_{n-1,m}}{h}\right)_\Omega
=-\left( \frac{z_m^n-z_m^{n-1}}{h}, \left(M+L+\frac 1m\right) \frac{z_m^n-z_m^{n-1}}{h}\right)_\Om\non\\ 
-&\left( \frac{z_m^n-z_m^{n-1}}{h}, \frac{ \partial\Phi(z_m^n)-\partial\Phi(z_m^{n-1})}{h}\right)_\Om+
\left( \frac{z_m^n-z_m^{n-1}}{h}, \frac{\hat{z}_m^n-\hat{z}_m^{n-1}}{h}\right)_\Om\\
=&-\left\|M^{1/2} \frac{z_m^n-z_m^{n-1}}{h}\right\|^2_2-\left\|L^{1/2} \frac{z_m^n-z_m^{n-1}}{h}\right\|^2_2-
\left\|\frac 1{\sqrt{m}} \frac{z_m^n-z_m^{n-1}}{h}\right\|^2_2\non\\&-
C_1\left\|\Curl B \frac{z_m^n-z_m^{n-1}}{h}\right\|^2_2+
\left( \frac{z_m^n-z_m^{n-1}}{h}, \frac{\hat{z}_m^n-\hat{z}_m^{n-1}}{h}\right)_\Om\non.
\end{align}
Now we combine (\ref{est_n1}) and (\ref{est_n2}), multiply  the obtained 
relation by $h$ and sum it up for $n=1,...,l$ 
and any fixed $l\in\{1,...,2^m\}$. We obtain 
\begin{align}
\label{est_n2'}
h&\sum_{n=1}^l \Big(\left\|M^{1/2} \frac{z_m^n-z_m^{n-1}}{h}\right\|^2_2+
\left\|L^{1/2} \frac{z_m^n-z_m^{n-1}}{h}\right\|^2_2+\left\|\frac 1{\sqrt{m}} \frac{z_m^n-z_m^{n-1}}{h}\right\|^2_2\non\\&+C_1\left\|\Curl B \frac{z_m^n-z_m^{n-1}}{h}\right\|^2_2\Big)+\int_{\Om}f(x,\Sig_{l,m}(x)) dx\non\\
&\leq F(\Sig_{0,m})+h\sum_{n=1}^l \left( \frac{z_m^n-z_m^{n-1}}{h}, \frac{\hat{z}_m^n-\hat{z}_m^{n-1}}{h}\right)_\Om,
\end{align}
 which implies the estimate
 \begin{align}
 \label{est_n3}
&\|M^{1/2} z_{mt}\|^2_{2,\Om_{t}}+\|L^{1/2} z_{mt}\|^2_{2,\Om_{t}}+\left\|\frac1{\sqrt{m}} z_{mt}\right\|^2_{2,\Om_{t}}+C_1\|\Curl B z_{mt}\|^2_{2,\Om_{t}}\non\\&+\|f(\bar{\Sigma}^{\rm lin}_{m}(t))\|_{1,\Om}\leq F(\Sig(0))+\|z_{mt}\|_{q^*,\Om_{t}} \|\hat{z}_{mt}\|_{q,\Om_{t}}. 
 \end{align}
 Since by (\ref{aprioriEstim3}) the right side of (\ref{est_n3}) is bounded we obtain
\begin{align}
&\left\{f(\Sigbar_{m})\right\}_m\ {\rm is}\  {\rm uniformly}\  {\rm bounded}\  {\rm in}
 \ L^\infty(0,{T_e};L^{1}(\Om,{\R})),\label{aprioriEstim2'}\\
&\{L^{1/2} z_m\}_m \ {\rm is}\  {\rm uniformly}\  {\rm bounded}\  {\rm in}
 \ W^{1,2}(0,{T_e};L^{2}(\Om, {\R}^N)),\label{aprioriEstim3'}\\[1ex] 
& \left\{\frac1{\sqrt{m}} z_m\right\}_m \ {\rm is}\ 
  {\rm uniformly}\  {\rm bounded}\  {\rm in}
 \ W^{1,2}(0,{T_e};L^{2}(\Om,{\R}^N)),\label{aprioriEstim4'}\\
 & \left\{M^{1/2}z_m\right\}_m \ {\rm is}\ 
  {\rm uniformly}\  {\rm bounded}\  {\rm in}
 \ W^{1,2}(0,{T_e};L^{2}(\Om, {\R}^N)),\label{aprioriEstim5'}\\
 & \left\{\Curl Bz_m\right\}_m \ {\rm is}\ 
  {\rm uniformly}\  {\rm bounded}\  {\rm in}
 \ W^{1,2}(0,{T_e};L^{2}(\Om, {M}^3)).\label{aprioriEstim6'}
 \end{align}
Furthermore, due to the self-controlling property (\ref{SelfControlling}) and 
(\ref{aprioriEstim3'}), (\ref{aprioriEstim4cc}) we obtain that
\begin{eqnarray}
&&\{B(\partial_tz_m)\}_m \ {\rm is}\  {\rm uniformly}\  {\rm bounded}\  {\rm in}
 \ L^2(0,{T_e};Z^{2}_{\Curl}(\Omega,{\cal M}^3)).
\label{aprioriEstim3new3}
 \end{eqnarray}
 Moreover, (\ref{RotheAffineinterpolant}) and (\ref{RotheConstantinterpolant}) yield 
$\{Bz_m(x, t), B\bar z_m(x,t)\}_m\in\mathfrak{sl}(3)$ for a.e. $(x, t)\in\Omega_{T_e}$.
\vspace{1ex}\\
{\bf Additional regularity of discrete solutions.}  
In order to get the additional 
a'priori estimates, we extend the function $b$ to $t<0$ by setting
$b(t)=b(0)$. 
The extended function $b$ is in the space
$W^{1,p}(-2h,T_e;W^{-1,p}(\Omega,{\mathbb R}^3))$. Then, we set
 $b^0_m=b^{-1}_m:=b(0)$. 
Let us further set $$z_m^{-1}:=z_m^0-h {\cal G}(\Sigma^{\rm lin}_{0,m}).$$
The assumption (\ref{Assumption1}) implies that $z_m^{-1}=0$.
Next, we define functions
$(u_m^{-1}, \sigma_m^{-1})$ and $(u_m^0, \sigma_m^0)$ as solutions of the 
linear elasticity problem (\ref{elast1}) - (\ref{elast3})
to the data $\hat b=b^{-1}_m$, $\hat\ve_p=0$ 
and $\hat b=b^0_m$, $\hat\ve_p=0$,
respectively. Obviously, 
the following estimate holds
\begin{eqnarray}\label{IneqPrepare}
\left\{
\left\|\frac{u_m^0-u_m^{-1}}h\right\|_{2},
\left\|\frac{\sigma_m^0-\sigma_m^{-1}}h\right\|_{2}\right\}\le C,
\end{eqnarray}
where $C$ is some positive constant independent of $m$.
Taking now the incremental ratio of (\ref{CurlPr3Dis}) for
$n=1,...,2^m$, we obtain\footnote{For sake of
simplicity we use the following notation 
$\rt\phi^n_m:=(\phi^n_m-\phi^{n-1}_m)/h$, where 
 $\phi^{0}_m,\phi^{1}_m,...,\phi^{m}_m$ is any family of functions.}
\[\rt z^n_m-\rt z^{n-1}_m={\cal G}(\Sigma ^{\rm lin}_{n,m})-
{\cal G}(\Sigma ^{\rm lin}_{(n-1),m}).\]
Let us now multiply the last identity by 
$-(\Sigma^{\rm lin}_{n,m}- \Sigma^{\rm lin}_{(n-1),m})/h$.
Then using the monotonicity of ${\cal G}$ we obtain that
\[\frac{1}{m} \Big(\rt z^n_m-\rt z^{n-1}_m, \rt z^n_m\Big)_\Omega
+\big(\rt z^n_m-\rt z^{n-1}_m, L\rt z^n_m\big)_\Omega\]\[
+\big(\rt z^n_m-\rt z^{n-1}_m, B^T\Curl\Curl B(\rt z^n_m)\big)_\Omega
\]\[\le\big(\rt z^n_m-\rt z^{n-1}_m,C_1B^T\rt\sigma^{n}_m\big)_\Omega
+\big(\rt z^n_m-\rt z^{n-1}_m,B^T \rt\hat\sigma^{n}_m\big)_\Omega.
\]
 With 
(\ref{CurlPr1Dis}) and (\ref{CurlPr2Dis}) the previous inequality can be
rewritten as follows
\[\frac{1}{m} \Big(\rt z^n_m-\rt z^{n-1}_m, \rt z^n_m\Big)_\Omega
+\big(\rt z^n_m-\rt z^{n-1}_m, L\rt z^n_m\big)_\Omega\]\[
+\big(\rt z^n_m-\rt z^{n-1}_m, B^T\Curl\Curl B(\rt z^n_m)\big)_\Omega
+\big(\rt \sigma^n_m-\rt \sigma^{n-1}_m,{\mathbb C}^{-1}\rt\sigma^{n}_m\big)_\Omega
\]\[\le\big(\rt \hat\sigma^n_m-\rt\hat\sigma^{n-1}_m,\bar{\mathbb C}\rt\sigma^{n}_m\big)_\Omega
+\big(\rt z^n_m-\rt z^{n-1}_m, B^T\rt\hat\sigma^{n}_m\big)_\Omega.
\]
As in the proof of (\ref{AprioriEstimHelp1}),
 multiplying  the last inequality by $h$ and summing with respect to $n$
 from 1 to $l$ 
for any fixed $l\in[1,2^m]$ we get the estimate
\begin{eqnarray}\label{IneqExstSols}
&&\frac{h}{m}\|\rt z^l_m\|^2_{2}+h\|L^{1/2}\rt z^l_m\|^2_{2}+
h\|{\mathbb B}^{1/2}\rt\sigma^l_m\|^2_{2}+h\|\Curl B\rt z^l_m\|^2_{2}
\le 2hC^{(0)}\non\\[1ex]
&&
+ 2h\sum^l_{n=1}\left(B^T\rt\hat\sigma^n_m,\rt z^n_m-\rt z^{n-1}_m\right)_\Omega
+ 2h\sum^l_{n=1}\big(\rt \hat\sigma^n_m-\rt\hat\sigma^{n-1}_m,\bar{\mathbb C}\rt\sigma^{n}_m\big)_\Omega,
\end{eqnarray}
where now $C^{(0)}$ denotes
\[2C^{(0)}:=\|{\mathbb B}^{1/2}\rt \sigma^0_m\|^2_{2}.\]
We note that \eq{IneqPrepare} yields the uniform
boundness of $C^{(0)}$ with respect to $m$.
Summing now (\ref{IneqExstSols}) for $l=1,...,2^m$ we derive the inequality
\begin{eqnarray}\label{aprioriEstim21}
\frac{1}{m}\left\|\partial_tz_m\right\|^2_{2,\Omega_{T_e}}+
\left\|L^{1/2}\left(\partial_tz_m\right)\right\|^2_{2,\Omega_{T_e}}+
\left\|\Curl B\left(\partial_tz_m\right)\right\|^2_{2,\Omega_{T_e}}\\
+C\left\|\partial_t\sigma_m\right\|^2_{2,\Omega_{T_e}}\le 
C\|\partial_t\hat\sigma_m\|_{2,\Omega_{T_e}}
(\|\partial_t\sigma_m\|_{2,\Omega_{T_e}}+\|B(\partial_tz_m)\|_{2,\Omega_{T_e}}).\non
\end{eqnarray}
Using the self-controlling property (\ref{SelfControlling}) and Young's
inequality with $\epsilon>0$ we obtain that
\begin{eqnarray}\label{aprioriEstim22}
\frac{1}{m}\left\|\partial_tz_m\right\|^2_{2,\Omega_{T_e}}+
\left\|L^{1/2}\left(\partial_tz_m\right)\right\|^2_{2,\Omega_{T_e}}+
\left\|\Curl B\left(\partial_tz_m\right)\right\|^2_{2,\Omega_{T_e}}+
C_\epsilon\left\|\partial_t\sigma_m\right\|^2_{2,\Omega_{T_e}}\\
\le 
C\|\partial_t\hat\sigma_m\|^2_{2,\Omega_{T_e}} +\|B(\partial_tz_m)\|_{2,\Omega_{T_e}}).\non
\end{eqnarray}
Since $\hat\sigma_m$ is uniformly bounded in 
$W^{1,q}(\Omega_{T_e}, {\cal S}^3)$ and $B(\partial_tz_m)$ is uniformly bounded in
 $L^2(0,{T_e};L^{2}(\Omega,{\cal M}^3))$, 
estimates (\ref{aprioriEstim21}) and (\ref{aprioriEstim22}) imply 
\begin{eqnarray}
&&\{L^{1/2}(\partial_t z_m)\}_m \ {\rm is}\  {\rm uniformly}\  {\rm bounded}\  {\rm in}
 \ L^2(0,{T_e};L^{2}(\Omega,{\mathbb R}^N)),\label{aprioriEstim7}\\[1ex]
&&\{\partial_t\sigma_m\}_m\ {\rm is}\  {\rm uniformly}\  {\rm bounded}\  {\rm in}
 \ L^2(0,{T_e};L^{2}(\Omega,{\mathbb R}^N)),\label{aprioriEstim7a}\\[1ex]
&&\{\Curl B(\partial_t z_m)\}_m\ {\rm is}\  {\rm uniformly}\  {\rm bounded}\  {\rm in}
 \ L^2(0,{T_e};L^{2}(\Omega,{\cal M}^3)),\label{aprioriEstim7aa}\\[1ex]
&&\left\{\frac1{\sqrt{m}} \partial_t z_m\right\}_m\ {\rm is}\  {\rm uniformly}\  {\rm bounded}\  {\rm in}
 \ L^2(0,{T_e};L^{2}(\Omega,{\mathbb R}^N))\label{aprioriEstim7b},\label{aprioriEstim8}\\
&&\{Bz_m\}_m \ {\rm is}\  {\rm uniformly}\  {\rm bounded}\  {\rm in}
 \ H^1(0,{T_e};L^{2}_{\Curl}(\Omega,{\cal M}^3)). \label{aprioriEstim9}
\end{eqnarray}
{\bf Existence of solutions.} At the expense of extracting a subsequence, by estimates 
(\ref{aprioriEstim3}) - (\ref{aprioriEstim6}), (\ref{aprioriEstim2'}) - (\ref{aprioriEstim3new3}) and
(\ref{aprioriEstim7}) - (\ref{aprioriEstim9}) 
 we obtain that the sequences
in (\ref{aprioriEstim3}) - (\ref{aprioriEstim3new3}),  
(\ref{aprioriEstim7}) - (\ref{aprioriEstim9}) converge with respect to
 weak and weak-star topologies in corresponding spaces, respectively.
Next, we claim that weak limits of $\{\bar z_m\}_m$ and $\{z_m\}_m$ coincide.
Indeed, using (\ref{aprioriEstim3}) this can be shown as follows
\[\|z_m-\bar z_m\|^{q^*}_{q^*,\Omega_{T_e}}=\sum_{n=1}^m
\int_{(n-1)h}^{nh}\left\|(z^{n}_m-z^{n-1}_m)\frac{t-nh}{h}\right\|^{q^*}_{q^*}dt\]
\[=\frac{h^{{2}+1}}{{2}+1}\sum_{n=1}^m
\left\|\frac{z^{n}_m-z^{n-1}_m}{h}\right\|^{q^*}_{q^*}=\frac{h^{2}}{{2}+1}
\left\|\frac{dz_m}{dt}\right\|^{q^*}_{q^*,\Omega_{T_e}},\]
which implies that  $\bar z_m-z_m$ converges strongly to 0  in 
$ L^{q^*}(\Omega_{T_e},{\mathbb R}^N)$. The proof of the fact
 that the difference
$\bar \sigma_m-\sigma_m$ converges weakly to 0 in 
$L^{2}(\Omega_{T_e},{\cal S}^3)$ can be performed as in \cite[p. 210]{Roubi05}.
For the reader's convenience we reproduce here the reasoning from there.
Let us choose some appropriate number $d\in{\mathbb N}$ and then fix
any integer $n_0\in[1,2^d]$. Let $h_0=T_e/2^{n_0}$. Consider functions
 $I_{[h_0(n_0-1), h_0n_0]}v$ with  
$v\in L^{2}(\Omega,{\cal S}^3)$, where $I_K$ denotes the indicator function
of a set $K$. We note that, according to \cite[Proposition 1.36]{Roubi05},
 the linear combinations of all such functions
are dense in $L^{2}(\Omega_{T_e},{\cal S}^3)$. Then for any $h\le h_0$\footnote{We recall that $h$ is chosen to be equal to $T_e/2^{m}$ 
for some $m\in{\mathbb N}$.}
\[\left(\sigma_m-\bar \sigma_m,I_{[h_0(n_0-1), h_0n_0]}v\right)_{\Omega_{T_e}}
=\int_{h_0(n_0-1)}^{h_0n_0}\left(\sigma_m(t)-\bar \sigma_m(t),v\right)_{\Omega}dt\]
\[=\sum_{n=h_0(n_0-1)/h+1}^{h_0n_0/h}\int_{(n-1)h}^{nh}
\left((\sigma^n_m-\sigma^{n-1}_m)\frac{t-nh}{h},v\right)_{\Omega}dt\]
\[=-\frac{h}2\left(\sigma_m^{h_0n_0/h}-\sigma^{h_0(n_0-1)/h}_m,v\right)_{\Omega}=
-\frac{h}2\left(\bar\sigma_m(h_0n_0)-\bar\sigma_m(h_0(n_0-1)),v\right)_{\Omega}.\]
Employing (\ref{aprioriEstim3a}) we get that 
$\bar \sigma_m-\sigma_m$ converges weakly to 0 in 
$L^{2}(\Omega_{T_e},{\cal S}^3)$. Next, by (\ref{aprioriEstim3b}) the
sequence $\{z_m/{m}\}_m$ converges strongly to 0 in 
$L^{2}(\Omega_{T_e},{\R}^N)$. Summarizing all observations made above
we may conclude that the limit functions denoted by $\tilde v, \tilde\sigma, z$ and
$\Sigma^{\rm lin}$ have the following properties
\[(\tilde v, \tilde\sigma)\in H^{1}(0,T_e; H^{1}_0(\Omega, {\mathbb R}^3) \times
 L^{2} ({\Omega}, {\cal S}^3)),\] 
\[
\Sigma^{\rm lin}\in L^{q}(\Omega_{T_e}, {\R}^N),, \ \ z\in W^{1,q^*}(0,T_e;L^{q^*}(\Omega, \R^N)),\]
\[ Bz\in   
H^{1}(0,T_e;L^{2}_{\Curl}(\Omega,{\cal M}^3))\cap 
L^{2}(0,{T_e};Z^{2}_{\Curl}(\Omega,{\cal M}^3)).\]
Moreover,  
$Bz(x, t)\in\mathfrak{sl}(3)$ holds for a.e. $(x, t)\in\Omega_{T_e}$.
Before passing to the weak limit, we note that the 
Rothe approximation functions
satisfy the equations
\begin{eqnarray}
- \di_x\bar \sigma_m(x,t) &=&  0, \label{Curl1Dis}
\\[1ex]
\sigma_m(x,t) &=& {\mathbb C}(\sym(\na_xu_m(x,t)-Bz_m(x,t)))  \label{Curl2Dis}\\[1ex]
&&\hspace{7ex}+({\mathbb C}[x] -\hat{\mathbb C})(\hat{\mathbb C})^{-1}\hat\sigma_m(x), \non
\\[1ex] 
\label{Curl3Dis} \partial_t z_m(x,t) & \in & 
g \big(\bar\Sigma^{\rm lin}_{m}(x,t)\big),\label{micro3Dis} 
\end{eqnarray}
together with the initial and  boundary conditions   
\begin{eqnarray} 
z_m(x,0) &=& 0, \ \ x \in \Omega,\label{Curl4Dis}\\
 Bz_m(x,t)\times n (x)&=&0,\hspace{7ex} x \in \partial\Omega, 
\label{Curl5Dis}\\
u_m(x,t) &=& 0, \hspace{7ex} x \in \partial \Omega\,.\label{Curl6Dis}
\end{eqnarray}
Passing to the weak limit in (\ref{Curl1Dis}), (\ref{Curl2Dis}) and (\ref{Curl4Dis}) - (\ref{Curl6Dis})
we obtain that the limit functions $\tilde v, \tilde\sigma, z$ satisfy equations 
(\ref{CurlPr1A}), (\ref{CurlPr2A}) and (\ref{CurlPr4A}) - (\ref{CurlPr6A}). 
To show that the limit functions
satisfy also (\ref{CurlPr3A}) we proceed as follows:\\
As above, the system (\ref{Curl1Dis}) - (\ref{Curl6Dis}) can be rewritten
as
\begin{eqnarray}
\label{IneqWeakSolHelp2}
&&\int_0^{T_e}\int_\Omega\left(g^{-1}(\partial_tz_m(x,t))\cdot\partial_tz_m(x,t))\right)dxdt=
-\left(\frac{d\sigma_m}{dt},
{\mathbb C}^{-1}\bar\sigma_m\right)_{\Omega_{T_e}}\non\\
&&-
 \left(\frac{dz_m}{dt},L\bar z_m\right)_{\Omega_{T_e}}
-\frac1{m}\left(\frac{dz_m}{dt},\bar z_m\right)_{\Omega_{T_e}}\\
&&- C_1\left(\frac{dz_m}{dt},B^T\Curl\Curl B\bar z_m\right)_{\Omega_{T_e}}+
\left(B^T\hat\sigma_m,\partial_tz_m\right)_{\Omega_{T_e}}
+\left(\bar{\mathbb C}\bar\sigma_m, \partial_t\hat\sigma_m\right)_{\Omega_{T_e}}.\non
\end{eqnarray}
Due to (\ref{aprioriEstim7}) - (\ref{aprioriEstim9})
  we can pass to
the weak limit inferior in (\ref{IneqWeakSolHelp2}) to get the
following inequality
\begin{eqnarray}
\label{IneqWeakSolHelp3}
&&\limsup_{m\to\infty}\int_0^{T_e}\int_\Omega\left(g^{-1}(\partial_tz_m(x,t))\cdot\partial_tz_m(x,t))\right)dxdt\\
&&\le
\left(\partial_tz, B^T(\tilde\sigma+\hat\sigma)-L z-B^T\Curl\Curl Bz\right)_{\Omega_{T_e}}.\non
\end{eqnarray}
Let ${\cal G}$ denote the canonical
extension of $g$. Then (\ref{IneqWeakSolHelp3}) reads as follows
\begin{eqnarray}
\label{IneqWeakSolHelp4}
\limsup_{m\to\infty}\left({\cal G}^{-1}(\partial_tz_m),\partial_tz_m\right)_{\Omega_{T_e}}
\le
\left(\partial_tz,B^T(\tilde\sigma+\hat\sigma)-Lz-B^T\Curl\Curl Bz\right)_{\Omega_{T_e}}.
\end{eqnarray}
Since ${\cal G}^{-1}$ is pseudo-monotone, inequality (\ref{IneqWeakSolHelp4}) yields that
for a.e. $(x, t)\in \Omega_{T_e}$
\[\partial_tz(x,s)\in g(B^T(\tilde\sigma(x, t)+\hat\sigma(x, t))-L z(x, t)-B^T\Curl\Curl Bz(x, t)).\]
Therefore, we conclude that 
the limit functions $\tilde v, \tilde\sigma, z$ and $\Sigma^{\rm lin}$ satisfy equations
(\ref{CurlPr1A}) - (\ref{CurlPr6A})  and the existence of strong solutions is herewith established.
 
  This completes the proof
of Theorem~\ref{existMain}.
\end{proof}



\section{Uniqueness of strong solutions}\label{Existence}

In this section we present the uniqueness result for (\ref{CurlPr1}) - (\ref{CurlPr6}) with a function $g$ satisfying the self-controlling condition (\ref{SelfControlling}). A function $g$ having the property (\ref{SelfControlling}) is automatically single-valued. Having noticed this we obtain the following uniqueness result.
\begin{Theor}\label{uniqueMain} 
Let all assumptions of Theorem~\ref{existMain} be satisfied.
Then the solution $(u,\sigma,z)$
of the initial boundary value problem (\ref{CurlPr1}) - (\ref{CurlPr6}) is unique.
\end{Theor}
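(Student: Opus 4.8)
The plan is to prove uniqueness by a standard energy (monotonicity) argument applied to the difference of two strong solutions. Suppose $(u_1,\sigma_1,z_1)$ and $(u_2,\sigma_2,z_2)$ are two strong solutions with the same data, and set $\bar u=u_1-u_2$, $\bar\sigma=\sigma_1-\sigma_2$, $\bar z=z_1-z_2$, with $\bar z=(\bar p,\bar\gamma)$. Since the data are identical, the difference satisfies the homogeneous version of (\ref{CurlPr1A})--(\ref{CurlPr6A}): in particular $-\di_x\bar\sigma=0$, $\bar\sigma=\mathbb{C}\,\sym(\na\bar u-B\bar z)$, $\bar z(0)=0$, $B\bar z\times n=0$ on $\partial\Omega$, and $\partial_t\bar z\in g(\Sigma_1^{\rm lin})-g(\Sigma_2^{\rm lin})$ where $\Sigma_i^{\rm lin}=B^T\sigma_i-Lz_i-C_1B^T\Curl\Curl Bz_i$.

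First I would test the momentum balance with $\partial_t\bar u$ to get $(\bar\sigma,\sym(\na_x\partial_t\bar u))_\Omega=0$, and then, exactly as in the a-priori estimates of Theorem~\ref{existMain}, rewrite $\sym(\na_x\partial_t\bar u)=\mathbb{C}^{-1}\partial_t\bar\sigma+\partial_t\sym(B\bar z)$ to obtain, pointwise in $t$,
\begin{equation}\label{UniqEn}
\tfrac{d}{dt}\Big[\tfrac12\|\mathbb{C}^{-1/2}\bar\sigma(t)\|_2^2+\tfrac12\|L^{1/2}\bar z(t)\|_2^2+\tfrac{C_1}2\|\Curl B\bar z(t)\|_2^2\Big]=-\big(\partial_t\bar z,\Sigma_1^{\rm lin}-\Sigma_2^{\rm lin}\big)_\Omega\le 0,
\end{equation}
where the last inequality is the monotonicity of $g$ (condition (\ref{monotype2})), using $\partial_t\bar z=\partial_t z_1-\partial_t z_2$ and $\partial_t z_i\in g(\Sigma_i^{\rm lin})$; the step that the cross terms $(\partial_t\bar z, B^T\bar\sigma)_\Omega$ and $C_1(\partial_t\bar z,B^T\Curl\Curl B\bar z)_\Omega$ combine correctly with the energy derivative uses the self-adjointness identity (\ref{monotonyCurlCurl}) for $\Curl\Curl$ on $Z^2_{\Curl}$ and the definition of the Helmholtz projector, precisely as in the derivation of (\ref{AprioriEstimHelp1}).

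Integrating (\ref{UniqEn}) from $0$ to $t$ and using the homogeneous initial conditions $\bar\sigma(0)=0$, $\bar z(0)=0$, $\Curl B\bar z(0)=0$ (which follow since both solutions start from $z=0$ and solve the same elasticity problem at $t=0$) yields
\begin{equation}\label{UniqZero}
\tfrac12\|\mathbb{C}^{-1/2}\bar\sigma(t)\|_2^2+\tfrac12\|L^{1/2}\bar z(t)\|_2^2+\tfrac{C_1}2\|\Curl B\bar z(t)\|_2^2\le 0
\end{equation}
for all $t$. Hence $\bar\sigma\equiv0$; if $C_1>0$ then $\Curl B\bar z\equiv0$, and together with the micro-hard boundary condition $B\bar z\times n=0$ the incompatible Korn inequality (\ref{incompatible_korn}) (valid since $\Omega$ is sliceable) gives $\|B\bar z\|_{L^2(\Omega)}\le C(\Omega)(\|\sym B\bar z\|_{L^2}+\|\Curl B\bar z\|_{L^2})=C(\Omega)\|\sym B\bar z\|_{L^2}$; but $\bar\sigma=0$ forces $\sym(\na\bar u)=\sym(B\bar z)$, and testing the homogeneous elasticity problem shows $\sym B\bar z\in\mathcal{G}^2$ while also $\sym B\bar z=\sym(\na\bar u-B\bar z)$ up to the projection decomposition, yielding $\sym B\bar z=0$ (this is the point where one invokes that $Q_2\sym B\bar z$ and the compatible part must both vanish, i.e. Corollary~\ref{nonnegative} with $L\ge0$). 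Thus $B\bar z\equiv0$, hence $p_1=p_2$; the hardening part $\bar\gamma$ is then controlled by (\ref{IsotropStuff}): $\|L^{1/2}\bar z\|_2^2\ge\alpha\|\bar\gamma\|_2^2=0$, so $\bar\gamma\equiv0$, and finally $\sym\na\bar u=0$ with $\bar u|_{\partial\Omega}=0$ gives $\bar u\equiv0$ by classical Korn. The main obstacle is the careful bookkeeping in going from (\ref{UniqEn}) to (\ref{UniqZero}) — i.e. verifying that the $\Curl\Curl$ and Helmholtz-projection cross terms assemble into an exact time derivative of a nonnegative quantity (this requires the regularity $Bz_i\in H^1(0,T_e;L^2_{\Curl})\cap L^2(0,T_e;Z^2_{\Curl})$ guaranteed by Definition~\ref{StrongSol} so that (\ref{monotonyCurlCurl}) and integration by parts in $t$ are legitimate), together with the degenerate case $C_1=0$ or $L=0$, where one must lean entirely on the incompatible Korn inequality and Corollary~\ref{nonnegative} rather than on coercivity of the defect/hardening energy.
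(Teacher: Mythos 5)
Your energy/monotonicity estimate in the first half is essentially the paper's: testing the monotonicity of $g$ applied to the two $\Sigma^{\rm lin}_i$ with $\partial_t(z_1-z_2)$, performing the same $\mathbb C^{-1}$/$\Curl\Curl$/$L$ bookkeeping as in the a-priori estimates, and integrating from $t=0$ gives exactly the inequality (\ref{IneqUnique1}), and hence $\bar\sigma\equiv0$, $\Curl B\bar z\equiv0$, $L\bar z\equiv0$. So far so good, and the regularity remarks you make about Definition~\ref{StrongSol} and (\ref{monotonyCurlCurl}) are on target.

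The gap is in how you try to upgrade this to $\bar z\equiv0$. You want to deduce $\sym B\bar z=0$ so that the incompatible Korn inequality (\ref{incompatible_korn}) can kill $B\bar z$, and you argue that $\bar\sigma=0$ forces $\sym B\bar z\in\mathcal G^2$ (equivalently $Q_2\sym B\bar z=0$), and then that ``$Q_2\sym B\bar z$ and the compatible part must both vanish''. But nothing you have produced shows $P_2\sym B\bar z=0$: from $\bar\sigma=0$ one only gets $\sym B\bar z=\sym\na\bar u=P_2\sym B\bar z$, i.e.\ that $\sym B\bar z$ lies in the range of $P_2$, which is far from being zero; and Corollary~\ref{nonnegative}, being only a non-negativity statement on $B^T\sym\mathbb C Q_2\sym B+L$, gives $0\ge0$ here and adds no information. (The intermediate equality ``$\sym B\bar z=\sym(\na\bar u-B\bar z)$ up to the projection decomposition'' is not true — the right-hand side is $\mathbb C^{-1}\bar\sigma=0$ — so the step is circular.) The paper instead closes the argument by a completely different, simpler route which you do not use: the three conclusions $\bar\sigma=0$, $\Curl\Curl B\bar z=0$, $L\bar z=0$ imply directly $\Sigma^{\rm lin}_1=\Sigma^{\rm lin}_2$, and since the self-controlling property (\ref{SelfControlling}) forces $g$ to be \emph{single-valued} (this is stated at the start of the section), one gets $\partial_t z_1=g(\Sigma^{\rm lin}_1)=g(\Sigma^{\rm lin}_2)=\partial_t z_2$, whence $z_1=z_2$ by the homogeneous initial condition; the remaining components $u_i,\sigma_i$ then coincide by the well-posedness of the elastic subproblem. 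You should replace the Korn/projection detour by this single-valuedness argument — that is the missing idea.
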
 
\begin{proof} Let $(u_1,\sigma_1,z_1)$ and $(u_2,\sigma_2,z_2)$ be two solutions
of the initial boundary value problem (\ref{CurlPr1}) - (\ref{CurlPr6}). Next, we argue similarly as in the proof of Theorem~\ref{existMain}:
The monotonicity of $g$ implies that
\[\big( \partial_t z_1(x,t) - \partial_t z_2(x,t), \Sigma^{\rm lin}_1(x,t) - \Sigma^{\rm lin}_2(x,t)\big)\ge0\]
for a.e. $(x,t)\in\Omega_{T_e}$, where
\[ \Sigma^{\rm lin}_i=B^T\sigma_i-Lz_i- B^T \Curl\Curl (Bz_i), \ \ i=1,2.\]
Integrating the last inequality over $\Omega_{t}$ with $t\in(0, T_e)$ 
and using the equations (\ref{CurlPr1}) and
(\ref{CurlPr2}) we get the following estimate for the difference of the solutions 
(here ${\mathbb B}:={\mathbb C}^{-1}$)
\begin{eqnarray}
\label{IneqUnique1}
0&\ge& \|\Curl p_1(t)-\Curl p_2(t)\|_2^2+\|{\mathbb B}^{1/2}(\sigma_1(t)- \sigma_2(t))\|_2^2\non\\
&&\hspace{1ex}+\|{L}^{1/2}(z_1(t)- z_2(t))\|_2^2,
\end{eqnarray}
which holds for a.e. $t\in(0, T_e)$. The estimate (\ref{IneqUnique1}) together with the condition
(\ref{IsotropStuff}) imply that $\Sigma^{\rm lin}_1(x,t)=\Sigma^{\rm lin}_2(x,t)$
for a.e. $(x,t)\in\Omega_{T_e}$. Since the function $g$ is single-valued and 
$(u_i,\sigma_i,z_i)$, $i=1,2$, are the solutions of the problem (\ref{CurlPr1}) - (\ref{CurlPr6}),
the last identity yields that $p_1(x,t)=p_2(x,t)$ for a.e. $(x,t)\in\Omega_{T_e}$. 
This completes the proof of the theorem.
\end{proof}
\ \vspace{1ex}\\
{\bf Acknowledgement.}

The authors thank Krzysztof Chelminski and Patrizio Neff for the critical reading of the manuscript 
and valuable comments which resulted in the improvements of the proofs and of the presentation 
of the obtained results.

\bibliographystyle{plain} 
{\footnotesize
\bibliography{literatur1,literaturliste}
}

\end{document}